\def\S{\ensuremath {\mathcal S}}
\def\R{\ensuremath {\mathbb R}}
\newcommand{\ra}{\rightarrow}
\newcommand{\fr}{\frac}
\newcommand{\lt}{\leqslant}
\newcommand{\gt}{\geqslant}
\newcommand{\bl}{\biggl}
\newcommand{\br}{\biggr}
\newcommand{\e}{\varepsilon}
\renewcommand{\P}{\ensuremath{\mathbb P}}
\newcommand{\E}{\ensuremath{\mathbb E}}
\newcommand{\F}{\ensuremath{\mathcal F}}
\newcommand{\A}{\ensuremath{\mathcal A}}
\renewcommand{\L}{\ensuremath{\mathcal{L}}}
\newtheorem{rem}{Remark}[section]
\newtheorem{Def}{Definition}[section]
\newtheorem{theo}{Theorem}[section]
\newtheorem{iteration lemma}{iteration Lemma}[section]
\newtheorem{cor}{Corollary}[section]
\begin{document}
%
%
%
%
%

\vspace*{0.4cm}
\begin{center}
\baselineskip=7mm
{\LARGE\bf {Asymptotic properties of a stochastic Gilpin-Ayala model under regime switching}}\\[1cm]
\baselineskip=7mm
 \large\bf
Kai Wang and Yanling Zhu$^{*}$\\[6mm]
 {\footnotesize \it  School of Statistics and Applied Mathematics, Anhui University of Finance and Economics,\\ Bengbu 233030, \ P.R. CHINA}
\begin{figure}[b]
\footnotesize
\rule[-2.truemm]{5cm}{0.1truemm}\\[1mm]
{
\\ $^*$Corresponding author. 
\\
{\it E-mail addresses}\,:
wangkai050318\textit{\char64}163.com(K. Wang);\; zhuyanling99\textit{\char64}126.com(Y.L. Zhu).}
\end{figure}
\end{center}
\vspace{0.03cm}
\noindent\hrulefill \newline

\begin{center}
\begin{minipage}{15cm}\baselineskip=7.mm
\vspace{-3mm} \noindent{\bf Abstract:}\ \ In this paper,  a stochastic Gilpin-Ayala population model with regime switching and white noise is considered. All parameters are influenced by stochastic perturbations. The existence of global positive solution, asymptotic stability in probability, $p$th moment exponential stability, extinction, weak persistence, stochastic permanence and stationary distribution of the model are investigated, which generalize some results in the literatures. Moreover, the conditions presented for  the stochastic permanence and the existence of stationary distribution improve the previous results.


\vspace{1mm}
\noindent {{\bf Keywords:}}\ \ { Global positive solution; Weak persistence; Extinction; Stationary distribution }\\[0.1cm]
{\bf AMS(2000):}\ \ 60H10;\ \  60J60;\ \  92D25
\end{minipage}
\end{center}
\vspace{1mm} \noindent\hrulefill \newline
\baselineskip=7.mm

\vspace{2cm}
\section{Introduction}
In order to describe the nonlinear rate change of the population size, Gilpin and Ayala (1973) \cite{ga-73}
proposed a general logistic model (called GA model):
\[d x(t)=x(t)[a-bx^\theta(t)]d t,\]
where $x(t)$  denotes the population size of species at time $t$; $a$ and $b$ are positive constants denote the natural growth rate and death rate of the species;  $\theta>0$ denotes the parameter to modify the classical deterministic logistic model, which is often called GA parameter.
\par  But in the real ecosystem, population systems are always influenced by stochastic environmental noise, which cannot be neglected for all population sizes.
 May (1973)  \cite{may-73}
revealed the fact that due to environmental noise, the birth rate, carrying capacity, competition coefficient and all the other  parameters involved in the system exhibit stochastic fluctuation to a greater or lesser extent. After that stochastic systems become more and more popular, and many authors have done some excellent works in this field. The  pioneer work was due to Khasminskii (1980) \cite{kh-80},
  who established and studied an unstable system by using two white noise sources, his work opened a new chapter in the study of stochastic stabilisation. Mao et. al (2002) \cite{mao-02}
  presented an important claim that the environmental noise can suppress explosions in a finite time in population dynamics.
 \par  Consider the environmental noise in  the birth rate and competition coefficient in GA model, Liu et. al (2012) \cite{liu-12-b}
 and Li (2013) \cite{li-13} presented the following stochastic GA model
 \begin{equation}\label{l2}
d x(t)=x(t)\big[a-bx^{\theta}(t)\big]d t+\sigma_1x(t) dB_1(t)+\sigma_2x^{1+\theta}(t) d B_2(t),
\end{equation}
 where $B_1$ and $B_2$ are two independent 1-dimensional Brownian motions, and studied the stationary distribution, ergodicity and extinction of the model.
 \par It was  known that besides the white noise there is another type of environment noise in the real ecosystem, that is the telegraph noise, which can be demonstrated as a switching between two or more regimes of environment. The regime switching can be modeled by a continuous time Markov chain $(r_t)_{t\ge  0}$ taking values in a finite state spaces $\S=\{1,2,...,m\}$ and with  infinitesimal generator $Q = (q_{ij})\in R^{m\times m}$. That is  $r_t$ satisfies
\[P(r_{t+ \delta  } = j|r_t = i) = \left\{
                                     \begin{array}{ll}
                                     \quad\;\;\, q_{ij}\delta  + o(\delta ), \;\;\text{if}\; i\not= j,\\
                                      1 + q_{ij}\delta  + o(\delta ),  \;\;\text{if}\;  i=j,
                                     \end{array}
                                   \right. \;\text{as}\;\delta\ra 0^+,
\]
 where $q_{ij}\gt0$ is the transition rate from $i$ to $j$ for $i\not=j$,  and $q_{ii}=-\sum_{i\not=j}q_{ij}$ for each $i\in\S,$ see Khasminskii et. al (2007) \cite{kh-07}
 and Zhu et. al (2007, 2009) \cite{zhu-07,zhu-09} for more details. Inspired by this,
 Liu et. al (2011, 2012) \cite{liu-11,liu-12-a}
 considered the following stochastic GA model under regime switching:
 \begin{equation}\label{lm}
d x(t)=x(t)\big[a(r_t)-b(r_t)x^{\theta}(t)\big]d t+\sigma_1(r_t)x(t) dB_1(t)+\sigma_2(r_t)x^{1+\gamma}(t) d B_2(t),
\end{equation}
where $\theta>0$, $\gamma>0$. They studied the existence of global positive solution, persistence, extinction and non-persistence of the species, and obtained the stochastic permanence of the species under the condition that $\gamma\in(0,1]$ and $\theta\in(0,1+\gamma]$. Considered  the claim in May (1973) \cite{may-73} that all parameters involved in ecosystems exhibit stochastic fluctuation, Settati and Lahrouz (2015)  \cite{Sa-15} firstly
presented the GA model with its GA parameter $\theta$ under regime switching:
\begin{equation}\label{sa}
d x(t)=x(t)\big[a(r_t)-b(r_t)x^{\theta(r_t)}(t)\big]d t+\sigma(r_t)x(t) d B(t),
\end{equation}
and investigated the global stability of the trivial solution, and presented sufficient conditions for the extinction,  persistence and existence of stationary distribution of model \eqref{sa}. Under the assumption that $\theta(i)\in(0,1]$ for all $i\in\S$, Liu et. al (2015) \cite{liu-15} investigated the asymptotical stability in probability and the existence of stationary distribution of the following GA model with regime switching
\begin{equation}\label{ll}
d x(t)=x(t)\big[a(r_t)-b(r_t)x^{\theta(r_t)}(t)\big]d t+\sigma_1(r_t)x(t) dB_1(t)+\sigma_2(r_t)x^{1+\theta(r_t)}(t) d B_2(t).
\end{equation}
One can see that model (\ref{ll}) doesn't include the general model (\ref{lm}) which  has important applications in financial field, such as the 3/2 model (or Ahn-Gao model, or Inverse Square Root model)
\[
d x(t)=\mu\, x(t)\big[a-x(t)\big]d t+\sigma x^{3/2}(t) d B(t),
\]
where $x(t)= \rho^2(t)$, and $\rho(t)$ denotes
the instantaneous standard deviation of the stock price returns at time $t$, see \cite{Lewis} for more details.
\par Motivated by above reason, in this paper we consider the telegraph noise in the GA parameters $\theta$ and $\gamma$ in model (\ref{lm}), and get a more general stochastic GA model under regime switching in the following form
\begin{equation}\label{ga}
d x(t)=x(t)\big[a(r_t)-b(r_t)x^{\theta(r_t)}(t)\big]d t+\sigma_1(r_t)x(t) dB_1(t)+\sigma_2(r_t)x^{1+\gamma(r_t)}(t) d B_2(t),
\end{equation}
with initial value $(x_0,r_0)\in\R_+\times\S,$ and for each $i\in\S$, $\theta(i)>0$ and $\gamma(i)>0.$ 
\begin{rem} If $\sigma_2(r_t)\equiv0$, then model \eqref{ga} becomes model \eqref{sa}, and transforms to model \eqref{ll} with $\gamma(i)\equiv\theta(i)$ for each $i\in\S$, and reduces to model \eqref{lm} while there is no switching in the GA parameters $\theta$ and $\gamma.$  If $\sigma_1(r_t)\equiv0$ and $\gamma(r_t)\equiv1/2$, then it is the $3/2$ model. Thus model \eqref{ga} generalizes the previous models.
\end{rem}
The contribution of this paper is that. Compared with the models in the literatures(e.g., \cite{ga-73,li-13,liu-11,liu-12-a,liu-12-b,liu-15,Sa-15}), our model \eqref{ga} provides a more realistic modeling of the population  dynamics, which also includes some important models in financial fields, such as the $3/2$ model, Logistic diffusion model and Double-Well potential model. The results on the existence of global positive solution, asymptotic stability in probability, $p$th moment exponential stability, weak persistence, extinction, stochastic permanence and stationary distribution of the model generalize the  results in previous works. Moreover, the conditions imposed on the positive recurrent, stochastic permanence and the existence of a unique ergodic asymptotically invariant distribution
improve those of Liu et. al (2011, 2012, 2015)(e.g., \cite{liu-11,liu-12-a,liu-15}).

Throughout this paper, we assume that there is a complete
probability space $(\Omega, \F, $ $\{\F_t \}_{t\gt 0}, \P)$ with a filtration $\{\F_t \}_{t\gt 0}$ satisfying
the usual conditions in which the one dimensional Brownian
motions $B_1(t)$ and $B_2(t)$ are defined, and
\vspace{3mm}
\\
\textbf{Assumption 1.} {\it The discrete component $(r_t)_{t\gt0} $ in model \eqref{ga} is an irreducible
continuous-time Markov chain with an invariant distribution $\pi = (\pi_i,i\in\S)$.}
\vspace{2mm}
\\\textbf{Assumption 2.} {\it The Brownian motions $B_1(t), B_2(t)$ and Markov chain $(r_t)_{t\gt0} $ are independent.}
\par
For convenience and simplicity, in this paper we using the following notations:
\[\aligned
\R_+=&(0,\infty);\;\mu(r_t)=a(r_t)-0.5\, \sigma_1^2(r_t) ;\;f_1(x)=\sum_{i\in\S}\pi_i\left[\mu(i)-b(i)x^{\theta(i)}\right];\\
f_2(x)=&\sum_{i\in\S}\pi_i\left[\mu(i)-b(i)x^{\theta(i)}-0.5\,\sigma_2^2(i)x^{2\gamma(i)}\right];
\;\check{a}=\max_{i\in\S}\{a(i)\};\;\hat{a}=\min_{i\in\S}\{a(i)\}.
\endaligned
\]
 \par
 For a function $V: \R_+\times\S\longmapsto\R_+$ such that $V(x,i)$ is twice continuously differential with respect to the first variable $x$ for each $i\in\S,$ we define the operator $\L$ by
\[
\L V(x,i)= x\big[a(i)-b(i)x^{\theta(i)}\big]\fr{\partial V(x,i)}{\partial x}+\fr12 [\sigma_1^2(i)x^2+\sigma_2^2(i)x^{2+2\gamma(i)}]\fr{\partial^2 V(x,i)}{\partial x^2}+\sum_{k\in \S}q_{ik}V(x,k).
\]

\begin{Def}(See Liu et. al (2011)\cite{liu-11} \label{df} for definitions 1-3, Khasminiskii et. al (2007)\cite{kh-07} for definition 4, and Mao et. al (2006) for definitions 5-6)
\par 1. The species $x(t)$ is said to be extinctive if $\lim_{t\ra\infty}x(t)=0;$
\par 2. The species $x(t)$ is said to be weak persistent if $\limsup_{t\ra\infty}x(t)>0;$
\par 3. The species $x(t)$ is said to be stochastically permanent if for any $\e\in(0,1)$, there is a pair of positive constants $\alpha,\beta$ such that $\liminf_{t\ra\infty}\P\{x(t)\gt\beta\}\gt1-\e$ and $\liminf_{t\ra\infty}\P\{x(t)\lt\alpha\}\gt1-\e;$
\par 4. The trivial solution  is said to be asymptotically stable in probability if it is stable in  probability, that is, for any $\e\in(0,1)$ and any $r_0\in\S$, $\lim_{x_0\ra0}\P\{\sup_{t\gt0 } |x_{x_0,r_0}(t)|>\e\}=0$, and satisfying $\lim_{x_0\ra0}\P\{\lim_{t\ra\infty } x_{x_0,r_0}(t)=0\}=1$ for any $r_0\in\S;$
\par 5. For $p>0$, the trivial solution is said to be $p$th moment exponentially stable if for all $(x_0,r_0)\in \R_+\times\S,$ $\lim\sup_{t\ra\infty}\fr1t\log(\E[x^p(t)])<0;$
\par 6.  A square matrix $A=(a_{ij})_{n\times n}$ is called a nonsingular M-matrix if $A$ can be expressed in the form $A=sI-G$ with a nonnegative square matrix $G$(i.e., each element of $G$ is nonnegative) and $s>\rho(G)$, where $I$ is the identity $n\times n$ matrix and $\rho(G)$ the spectral radius of $G.$
\end{Def}
\vspace{3mm}

\par The organization of this paper is as follows. In section 2, the existence of global positive solution of  model \eqref{ga} is proved. In section 3,  the asymptotic stability in probability and $p$th moment exponential stability of the trivial solution to model \eqref{ga} are investigated. In section 4,  some sufficient conditions for the weak persistence and extinction of the species described by model \eqref{ga} are presented. In section 5, the existence of stationary distribution of the solution $x(t)$ to model \eqref{ga} is studied. In section 6,  two examples are given to verify  the theoretical results obtained in previous sections. Some conclusions are given in the last section.

\section{Global Positive Solution}
In this section, we will prove the existence of global positive solution  $x(t)$ to model \eqref{ga} with any initial value $(x_0,r_0)\in\R_+\times\S $. We first prove the existence of unique global solution  $x(t)$ to model \eqref{ga}, and then prove that the solution is almost surely positive for all $t\geqslant0$, that is $\P_{x_0,r_0}(x(t)>0, \forall\,t\gt 0)=1.$

\begin{theo}
For any $(x_0,r_0)\in\R_+\times\S $, there is a unique global solution    $x(t)$ to model \eqref{ga}.
\end{theo}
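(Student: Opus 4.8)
The plan is to use the standard localization argument: first establish local existence and uniqueness, then use Theorem 2.1 together with an upper-bound estimate to rule out explosion to $+\infty$ in finite time. Since the coefficients $x\big[a(i)-b(i)x^{\theta(i)}\big]$, $\sigma_1(i)x$ and $\sigma_2(i)x^{1+\gamma(i)}$ are locally Lipschitz on $\R_+$ for each fixed $i\in\S$, the classical existence–uniqueness theorem for SDEs with Markovian switching gives a unique maximal local solution $x(t)$ on $[0,\tau_e)$, where $\tau_e$ is the explosion time; combined with Theorem 2.1, which already guarantees $x(t)>0$ for all $t<\tau_e$ almost surely, it remains only to show $\tau_e=\infty$ a.s.

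For the non-explosion part I would run the usual Khasminskii-type argument. Pick an integer $k_0$ large enough that $x_0\in(1/k_0,k_0)$ and, for each integer $k\ge k_0$, set $\tau_k=\inf\{t\ge 0:\ x(t)\notin(1/k,k)\}$; then $\tau_k\uparrow\tau_e$ a.s., and it suffices to prove $\P(\tau_e<\infty)=0$, for which I will show $\E[\tau_k\wedge T]$ stays controlled, or equivalently that $\P(\tau_k\le T)\to 0$ as $k\to\infty$ for every fixed $T>0$. Define a Lyapunov function that penalizes large $x$, e.g. $V(x,i)=x^{p}$ for some small fixed $p\in(0,1)$ (or $V(x)=x-1-\log x$), and compute $\L V$. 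Using that $b(i)>0$ and $\theta(i),\gamma(i)>0$, the term $-pb(i)x^{p+\theta(i)}$ dominates the positive contributions $pa(i)x^{p}+\tfrac12 p(p-1)\sigma_1^2(i)x^{p}+\tfrac12 p(p-1)\sigma_2^2(i)x^{p+2\gamma(i)}$ for large $x$ — in fact with $p<1$ the $\sigma_2$ term is already nonpositive — so $\L V(x,i)\le C$ for some constant $C=C(p)$ uniformly in $i$ and $x>0$. Then It\^o's formula gives $\E\big[V(x(\tau_k\wedge T),r_{\tau_k\wedge T})\big]\le V(x_0,r_0)+CT$, and on the event $\{\tau_k\le T\}$ the left side is at least $k^{p}\,\P(\tau_k\le T)$ (the lower bound $1/k$ contributing a vanishing or bounded amount), whence $\P(\tau_k\le T)\le (V(x_0,r_0)+CT)/k^{p}\to 0$. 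Letting $k\to\infty$ yields $\P(\tau_e\le T)=0$ for all $T$, hence $\tau_e=\infty$ a.s.

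The one point that needs a little care — and the main (mild) obstacle — is that the exponent $\theta(i)$ may be less than $1$ in some regimes, so $V(x,i)=x^p$ with the wrong $p$ could leave a net positive, possibly superlinear, drift from the $a(i)x^p$ term if $\theta(i)$ is very small. This is handled by choosing $p$ small enough (any $0<p<\min_i\gamma(i)$ wedged with, say, $1$ also kills the $\sigma_2$ term's sign issue) and then simply using $px^{p}(a(i)-b(i)x^{\theta(i)})\le \sup_{y>0}py^{p}(\check a-\hat b y^{\hat\theta})=:C_1<\infty$, which is finite because $\hat b>0$ and the polynomial $-\hat b y^{p+\hat\theta}$ eventually dominates $\check a y^{p}$; here $\hat b=\min_i b(i)$ and $\hat\theta=\min_i\theta(i)$. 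With $\L V\le C_1$ (plus the harmless nonpositive noise terms) the estimate above goes through verbatim. No further structural assumptions on $\theta,\gamma$ are needed, which is consistent with the generality claimed for model \eqref{ga}.
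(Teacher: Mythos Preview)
Your approach is essentially the paper's own: both use the Lyapunov function $V(x,i)=x^{p}$ with $p\in(0,1)$, observe that the $\sigma_2$ term in $\L V$ is nonpositive (since $p(p-1)<0$) and that $px^{p}\big(a(i)-b(i)x^{\theta(i)}\big)$ is bounded above on $\R_+$, hence $\L V\le K$, and then run the It\^o/localization contradiction. Your third paragraph is over-cautious --- any $p\in(0,1)$ works, no smallness relative to $\theta$ or $\gamma$ is needed.

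There is, however, a genuine slip in your localization step. With the two-sided stopping time $\tau_k=\inf\{t:x(t)\notin(1/k,k)\}$ and $V=x^{p}$, the assertion ``on $\{\tau_k\le T\}$ the left side is at least $k^{p}\P(\tau_k\le T)$'' is false: on the lower-exit part $\{x(\tau_k)=1/k\}$ one has $V(x(\tau_k))=k^{-p}$, which gives no useful lower bound. The parenthetical ``vanishing or bounded amount'' does not rescue this, since you need a \emph{lower} bound on $\E V$, not an upper one. The paper avoids the issue by using the one-sided stopping time $\tau_n=\inf\{t:x(t)\ge n\}$; then $x(\tau_n)=n$ on $\{\tau_n\le N\}$ and the contradiction $\e n^{p}\le x_0^{p}+KN$ goes through cleanly, with Theorem~2.1 separately handling positivity. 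Your suggested alternative $V(x)=x-1-\log x$ would blow up at both ends, but its generator contains the term $\tfrac12\sigma_2^2(i)x^{2\gamma(i)}$, which is not dominated by $-b(i)x^{1+\theta(i)}$ unless $\theta(i)+1\ge 2\gamma(i)$ --- a hypothesis imposed only later for Theorem~5.1, not here. The simplest repair is to switch to one-sided stopping exactly as the paper does.
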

\begin{proof}
Since all the coefficients of model \eqref{ga} are
locally Lipschitz continuous on $\R_+$,  there is a unique local solution $x(t)$ to model \eqref{ga} with initial value $(x_0,r_0)\in \R_+\times\S$ on
$t\in [0, \tau)$, where $\tau $ is the explosion time.
\par  Now  we show that the solution is globally existent, that is $\tau=\infty$. Let $n_0>0$ be so large that $x_0\in(0,n_0]$. For each $n>n_0$,  define stopping times $\tau_n=\inf\{t\in [0,\tau]\big| \;x(t)\gt n\}$, then $\tau_n$ is increasing as $n\ra\infty.$ Let $\tau_\infty=\lim_{n\ra\infty}\tau_n$, whence $\tau_\infty\lt \tau$ a.s.

We claim that
$\tau_\infty=\infty.$
 Otherwise, there must exist a pair of constants $N>0$ and $\e\in (0,1)$ such that $\P\{\tau_\infty\lt N\}>\e$.
 Therefore, there is an integer $N_1\gt n_0$ such that
$\P\{\tau_n\lt N\}>\e$ for $n\gt N_1.$

Define $C^2$-function $V(x, i)=x^p$ with $p\in(0,1)$, and by the definition of the operator of $\L$ we get
\[
\L V(x,i)=p\,x^p\left[a(i)+\fr12p(p-1)\sigma_1^2(i)-b(i)x^{\theta(i)}\right]+\fr12p(p-1)\sigma_2^2(i)x^{p+2\gamma(i)}\lt K,
\]
for $(x,i)\in \R_+\times \S.$
Then by It$\hat{\text{o}}$'s formula, we have
\[\aligned
V(x(\tau_n\wedge N), r_{\tau_n\wedge N}) = &V(x,i) + \int_{0}^{\tau_n\wedge N} \L V(x_{s},r_{s}) ds \\
&+ p\int_{0}^{\tau_n\wedge N} \sigma_1(r_s)x^p(s)dB_1(s) + p\int_{0}^{\tau_n\wedge N} \sigma_2(r_s)x^{p+\gamma(r_s)}(s)dB_2(s).
\endaligned
 \]
Taking expectation on both sides of the above equality gives
\[
\E[x^p(N\wedge\tau_n)]\lt x_0^p+K\E[N\wedge\tau_n]\lt  x_0^p+KN.
\]
Let $\Omega_n=\{\tau_n\lt N\}$, then $\P(\Omega_n)\gt \e.$ In view of that for every $\omega\in\Omega_n,$ $x(\tau_n,\omega)$ equals to $n$, and we get the following  contradiction:
\[\infty>x_0^p+KN\gt\E[1_{\Omega_n}(\omega)x^p(\tau_n)]\gt\e n^p\ra\infty\;\;as \;\;n\ra\infty.\]
 Thus $\tau_\infty=\infty$.
\end{proof}
\begin{theo}
 For any $(x_0,r_0)\in \R_+\times\S$, the solution $x(t)$ of model \eqref{ga} satisfying $\P_{x_0,r_0}(x(t)>0, \forall\,t\gt 0)=1.$
\end{theo}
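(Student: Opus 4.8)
The plan is to use the standard localization argument for SDEs with locally Lipschitz (but not globally Lipschitz) coefficients, combined with a carefully chosen Lyapunov function to rule out finite-time explosion and extinction. First I would note that since the drift and diffusion coefficients of model \eqref{ga} are locally Lipschitz on $\R_+$ (for each fixed state of $r_t$, the functions $x\mapsto x[a(i)-b(i)x^{\theta(i)}]$, $x\mapsto\sigma_1(i)x$ and $x\mapsto\sigma_2(i)x^{1+\gamma(i)}$ are smooth on $(0,\infty)$), for any initial value $(x_0,r_0)\in\R_+\times\S$ there is a unique local solution $x(t)$ on $[0,\tau_e)$, where $\tau_e$ is the explosion time. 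It therefore suffices to show $\tau_e=\infty$ a.s., and moreover that the solution never hits $0$; equivalently, with the notation of the excerpt, that $\tau_0=\infty$ a.s. To do this I would fix a large integer $n_0$ with $1/n_0<x_0<n_0$ and for each integer $n\ge n_0$ define the stopping time $\tau_n=\tau_{1/n}\wedge\tau_n=\inf\{t\ge 0:\ x(t)\notin(1/n,n)\}$, which is nondecreasing in $n$; set $\tau_\infty=\lim_{n\to\infty}\tau_n\le\tau_e$. The goal becomes $\P(\tau_\infty=\infty)=1$.

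The key step is the choice of Lyapunov function. Because the potential problems are both $x\to 0$ and $x\to\infty$, I would take something like $V(x)=x^q-1-q\log x$ for a suitably small constant $q\in(0,1)$, or more simply work with $V(x)=\log x$ together with a power term; the logarithm controls the approach to $0$ while a small positive power (small enough that $q<\min_i\theta(i)$ and $q$ kept away from the quadratic-noise exponents) keeps the $bx^\theta$ and $\sigma_2^2x^{2\gamma}$ terms dominant and negative for large $x$. Applying the generalized It\^o formula for the regime-switching diffusion to $V(x(t))$, one computes $LV(x)$ explicitly; using that $-b(i)x^{\theta(i)+q}\to-\infty$ and $-\tfrac12\sigma_2^2(i)x^{2\gamma(i)+q-2}\cdot x^{q}$-type terms dominate, while near $x=0$ the $-q\log x$ piece keeps $V$ large and $LV$ bounded above, one shows $LV(x)\le C$ on all of $\R_+$ for some constant $C$ (independent of $n$), possibly after taking maxima over the finitely many states $i\in\S$. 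Then $\E[e^{-Ct}V(x(t\wedge\tau_n))]\le V(x_0)$, or directly $\E[V(x(t\wedge\tau_n))]\le V(x_0)+Ct$ by Gr\"onwall/Dynkin. On the event $\{\tau_n\le T\}$, $x(\tau_n)$ equals either $n$ or $1/n$, so $V(x(\tau_n))\ge V(n)\wedge V(1/n)=:v_n\to\infty$ as $n\to\infty$. Hence $\P(\tau_n\le T)\le (V(x_0)+CT)/v_n\to 0$, giving $\P(\tau_\infty\le T)=0$ for every $T$, and thus $\tau_\infty=\infty$ a.s.

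I expect the main obstacle to be verifying the global upper bound $LV(x)\le C$ on $(0,\infty)$ for a single choice of $q$ that works simultaneously for all regimes $i\in\S$: one must check that the exponent $q$ can be chosen small enough that the negative leading terms (coming from $-b(i)x^{\theta(i)}$ and, crucially, from the $-\tfrac12\sigma_2^2(i)x^{2\gamma(i)}$ contribution in the It\^o correction, whose power is $q+2\gamma(i)$ after differentiating the power term) genuinely dominate every positive term as $x\to\infty$, while near $x=0$ the cross terms involving $x^{2\gamma(i)}$ from the $B_2$ noise remain bounded — they do, since $\gamma(i)>0$. Since $\S$ is finite, taking $C=\max_{i\in\S}\sup_{x>0}L_iV(x)$ is legitimate once each supremum is finite. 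Once the bound is in hand, the rest is the routine localization estimate sketched above, and the positivity $\P_{x_0,r_0}(x(t)>0,\ \forall t\ge 0)=1$ follows because $\tau_\infty=\infty$ precludes the solution reaching either boundary $0$ or $\infty$ in finite time.
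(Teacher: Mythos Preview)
Your argument is correct, but it is organized differently from the paper. The paper separates positivity (this theorem) from non-explosion (the following theorem), whereas you prove both at once with a single two-sided Lyapunov function. For positivity alone, the paper takes $V(x,i)=x^{-\beta}$, which blows up only at $0$; instead of a global constant bound it derives the linear estimate
\[
\L V(x,i)\le \beta\check b\,x^{\theta(i)-\beta}\le \beta\check b\,K^{\check\theta}\,V(x,i)\quad\text{on }(0,K),
\]
applies It\^o's formula up to $D(t)=t\wedge\tau_\e\wedge\tau_K$, uses Gronwall to get $\E[x^{-\beta}(D(t))]\le x_0^{-\beta}e^{\beta\check b K^{\check\theta}t}$, and finishes with Chebyshev:
\[
\P(\tau_\e<t\wedge\tau_K)\le \e^{\beta}\,\E[x^{-\beta}(D(t))]\to 0\quad\text{as }\e\to 0.
\]
Your choice $V(x)=x^q-1-q\log x$ with a global bound $\L V\le C$ is the more standard route in the literature and is more economical, since it yields Theorems~2.1 and~2.2 simultaneously via the usual exit-time estimate $\P(\tau_n\le T)\le (V(x_0)+CT)/v_n\to 0$. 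The paper's split approach buys only a marginally simpler computation of $\L V$ at each step, handling one boundary at a time. One minor remark: the smallness condition you suggest on $q$ (e.g.\ $q<\min_i\theta(i)$) is in fact unnecessary---any $q\in(0,1)$ works, because for large $x$ the term of highest exponent in $\L V$ is either $-qb(i)x^{\theta(i)+q}$ or $\tfrac12 q(q-1)\sigma_2^2(i)x^{q+2\gamma(i)}$, and both are negative.
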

\begin{proof}
Define $V(x,i)=x^{-2}$  for $(x,i) \in \R_{+ }\times \S$, and we have
\[\aligned
\L V(x,i)= & x^{-2}\left[2b(i)x^{\theta(i)}+3\sigma_1^2(i)+3\sigma_2^2(i)x^{2\gamma(i)}-2 \right]
:=F(x,i)x^{-2}.
\endaligned
\]
Define for $ 0<\varepsilon<x<K $, $\tau_\e=\inf\{t\gt0|\; x(t)\lt\e\}$, $\tau_K=\inf\{t\gt0|\; x(t)\gt K>1\}$, and $\tau_0=\inf\{t\gt0| \; x(t)=0\}$,
then $\tau_\e\ra\tau_0$ as $\e\ra 0.$ Meanwhile, it is well known from Theorem 2.1 that the solution $x(t)$ has no finite explosion time and hence $\tau_K\ra\infty$ a.s. as $K\ra \infty.$
\par  Set $D(t)=t\wedge\tau_\e\wedge\tau_K$,
then by It$\hat{\text{o}}$'s formula, we obtain
\[\aligned
\E\big[x^{-2}(D(t))\big]&=x_0^{-2}+\E\int_0^{D(t)}\L V(x(s),r_s)ds\lt x_0^{-2}+\check{F}\,\E\int_0^{D(t)} x^{-2}(s)ds,
\endaligned
\]
where $\check{F}=\max_{i\in\S} F(K,i).$
It follows from Gronwall's inequality that
\[
\E\big[x^{-2}(D(t))\big]\lt x_0^{-2}e^{\check{F}\,t}.
\]
If $\P(\tau_0<\infty)>0$, then we can choose $t$ and $K$ large enough such  that $\P(\tau_0<t\wedge\tau_K)>0.$  So by Chebeshev's inequality, we get
\[\aligned
0<&\,\P(\tau_0<t\wedge\tau_K)\lt \P(\tau_\e<t\wedge\tau_K)\lt \P(x(D(t))\lt\e)\\
=&\,\P(x^{-2}(D(t))\gt\e^{-2})\lt\e^{2}\E[x^{-2}(D(t))]
\lt\, \e^{2} x_0^{-2}e^{\check{F}\,t}\ra 0\;\;as \;\e\ra0,
\endaligned
\]
which is a contradiction, then $\P(\tau_0<\infty)=0$ is obtained.
\end{proof}
From above two theorems, one can easily get the following result.
\begin{cor} For any $(x_0,r_0)\in \R_+\times \S$, the solution $x(t)$ to model \eqref{ga} is global existence and remains in $\R_+$ for $t\gt0.$
\end{cor}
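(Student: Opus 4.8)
The statement is an immediate consequence of the two theorems just proved, so the plan is simply to assemble them. Theorem 2.2 guarantees that for every $(x_0,r_0)\in\R_+\times\S$ there is a unique solution $x(t)$ of \eqref{ga} whose explosion time $\tau_\infty$ is almost surely infinite; equivalently, $x(t)$ is defined and finite for all $t\gt0$ with probability one. This already supplies the ``global existence'' half of the corollary and shows the trajectory never escapes to $+\infty$ in finite time.

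For the remaining half I would invoke Theorem 2.1, which asserts $\P_{x_0,r_0}(x(t)>0,\ \forall\,t\gt0)=1$. Its proof controls $\E[x^{-\beta}(D(t))]$ along $D(t)=t\wedge\tau_\e\wedge\tau_K$ via the Lyapunov function $V(x,i)=x^{-\beta}$, It\^o's formula and Gronwall's inequality, and then Chebyshev's inequality forces $\tau_0=\inf\{t\gt0\mid x(t)=0\}$ to be almost surely infinite. Hence, on a set of full probability, $x(t)$ never reaches the lower boundary $0$ either.

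Combining the two facts: on an event of probability one the solution is defined for all $t\gt0$, stays strictly below $\infty$ and strictly above $0$, i.e. $x(t)\in\R_+=(0,\infty)$ for every $t\gt0$, which is exactly the claim. There is essentially no obstacle here; the only subtlety — already handled inside Theorems 2.1 and 2.2 — is that the two boundaries of $\R_+$ require different test functions ($x^{-\beta}$ near $0$, $x^p$ with $p\in(0,1)$ near $\infty$), since the nonlinear terms $b(i)x^{\theta(i)}$ and $\sigma_2^2(i)x^{2\gamma(i)}$ behave favourably for one boundary and must be bounded crudely for the other. The corollary merely records the conjunction of these statements, so no further computation is needed.
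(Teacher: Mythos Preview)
Your proposal is correct and matches the paper's approach exactly: the paper states the corollary follows immediately from the two preceding theorems, and you have simply spelled out that Theorem~2.2 gives global existence (no explosion to $+\infty$) while Theorem~2.1 gives strict positivity (no hitting $0$), whose conjunction yields $x(t)\in\R_+$ for all $t\gt0$. No further argument is needed.
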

\section{Stability of Trivial Solution}
\begin{theo} \label{tth} For any $(x_0,r_0)\in\R_+\times\S.$ If $\sum_{i\in\S}\pi_i\mu(i)<0,$ then the trivial solution to model \eqref{ga} is asymptotically stable in probability.
\end{theo}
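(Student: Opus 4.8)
The plan is to exhibit a single Lyapunov function of the form $V(x,i)=c_i\,x^p$ on $\R_+\times\S$, with $p\in(0,1)$ small and positive weights $c_1,\dots,c_m$ tuned to the hypothesis, for which $\L V\le-\kappa V$ holds globally for some $\kappa>0$; both requirements of Definition~\ref{df}(4) then follow from elementary supermartingale arguments. Write $\bar\mu:=\sum_{i\in\S}\pi_i\mu(i)<0$. Since the chain is irreducible (Assumption) with stationary distribution $\pi=(\pi_i)_{i\in\S}$, the range of the generator $Q$ is $\{v\in\R^m:\sum_i\pi_iv_i=0\}$, and because $\sum_{i\in\S}\pi_i\big(\bar\mu-\mu(i)\big)=0$ the Poisson equation
\[\sum_{j\in\S}q_{ij}\,\gamma_j=\bar\mu-\mu(i),\qquad i\in\S,\]
has a solution $(\gamma_1,\dots,\gamma_m)$. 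Fix $p\in(0,1)$ so small that $c_i:=1+p\,\gamma_i>0$ for every $i$ (it will be shrunk once more below), and set $V(x,i)=c_i\,x^p$.

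A direct computation with It\^o's formula gives, for $(x,i)\in\R_+\times\S$,
\[\aligned
\L V(x,i)={}&x^p\Big[p\,c_i\big(\mu(i)+\tfrac{p}{2}\sigma_1^2(i)\big)+\sum_{j\in\S}q_{ij}c_j\Big]\\
&{}-p\,c_i\,b(i)\,x^{p+\theta(i)}+\tfrac12 p(p-1)\,c_i\,\sigma_2^2(i)\,x^{p+2\gamma(i)}.
\endaligned\]
Using $\sum_{j\in\S}q_{ij}=0$ together with the Poisson equation, $\sum_{j\in\S}q_{ij}c_j=p\big(\bar\mu-\mu(i)\big)$, so the bracketed coefficient of $x^p$ equals $p\,\bar\mu+p^2 g_i(p)$, where the functions $g_i$ are bounded on $(0,1)$ uniformly over the finite set $\S$. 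As $\bar\mu<0$, after shrinking $p$ this coefficient is $\le\tfrac12 p\,\bar\mu<0$ for every $i$; since $b(i)>0$ and $p(p-1)<0$ the remaining two terms are $\le0$. Hence $\L V(x,i)\le\tfrac12 p\,\bar\mu\,x^p\le-\kappa V(x,i)$ with $\kappa:=-p\,\bar\mu/(2\max_j c_j)>0$.

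Finally, by Corollary~2.1 the solution is global and stays in $\R_+$, and $V\ge0$; so, after the usual localization that turns the stochastic integrals into genuine martingales, It\^o's formula and $\L V\le-\kappa V\le0$ show that $V(x(t),r_t)$ is a nonnegative supermartingale with $\E\big[V(x(t),r_t)\big]\le V(x_0,r_0)e^{-\kappa t}$. For stability in probability, the supermartingale maximal inequality gives, for $\e\in(0,1)$,
\[\P\Big\{\sup_{t\ge0}x(t)\ge\e\Big\}\le\P\Big\{\sup_{t\ge0}V(x(t),r_t)\ge(\min_jc_j)\e^p\Big\}\le\frac{V(x_0,r_0)}{(\min_jc_j)\e^p},\]
which tends to $0$ as $x_0\to0$. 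For the pathwise statement, the nonnegative supermartingale $V(x(t),r_t)$ converges a.s.\ to a limit $V_\infty\ge0$, and by Fatou $\E[V_\infty]\le\liminf_{t\to\infty}\E[V(x(t),r_t)]=0$, so $V_\infty=0$ a.s., i.e.\ $x(t)\to0$ a.s.\ for every $x_0\in\R_+$; in particular $\P\{\lim_{t\to\infty}x^{x_0,r_0}(t)=0\}=1$ for all $r_0\in\S$. (The argument in fact yields global asymptotic stability, and since $\E[x^p(t)]\le c_{r_0}(\min_jc_j)^{-1}x_0^p e^{-\kappa t}$ it also gives $p$th moment exponential stability for small $p$.)

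The only genuinely delicate point is the choice of the weights $c_i$: the hypothesis $\bar\mu<0$ enters exactly at the step where the $x^p$-coefficient $p\bar\mu+O(p^2)$ is made negative for small $p$, and this works only because $c_i=1+p\gamma_i$ with $\gamma$ solving the Poisson equation replaces the individual rates $\mu(i)$ by their $\pi$-average $\bar\mu$. Everything else — the It\^o computation, the supermartingale convergence, the Fatou step — is routine.
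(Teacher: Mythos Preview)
Your proof is correct and rests on the same mechanism as the paper's: a Lyapunov function of the form $c_i\,x^{p}$ with a small positive exponent and state-dependent weights built from a solution of the Poisson equation $Q\zeta=\bar\mu\,\mathbf{1}-\mu$, so that in $\L V$ the individual rates $\mu(i)$ are replaced by the $\pi$-average $\bar\mu$. (The paper writes $V(x,i)=(p+\zeta_i)x^{1/p}$ with $p$ large; after the substitution $1/p\mapsto p$ this is your $c_i x^{p}$ up to a scalar factor.) The one genuine difference is in the endgame: the paper stops at $\L V\le -K\,x^{1/p}$ on a punctured neighborhood and then invokes Lemma~3.3 and Remark~3.5(i) of Khasminskii et~al.\ (2007), whereas you push to the global bound $\L V\le -\kappa V$ and finish with a direct supermartingale argument (Doob's maximal inequality for stability in probability, supermartingale convergence plus Fatou for $x(t)\to0$ a.s.). Your route is self-contained and, as you note, immediately upgrades the conclusion to global asymptotic stability and $p$th-moment exponential stability for small $p$, which the paper obtains separately in Theorem~\ref{thr} under an $M$-matrix hypothesis. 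One cosmetic point: your symbol $\gamma_i$ for the Poisson solution clashes with the model's exponent $\gamma(i)$; renaming it (e.g.\ to $\zeta_i$, as the paper does) would avoid confusion.
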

\begin{proof}
Let $\zeta=(\zeta_1,...,\zeta_m)^T$ be a solution of the Poisson system:
\begin{equation}\label{ps}
Q\zeta=-\mu+\sum_{i\in\S}\pi_i\mu(i)\,\mathbf{1},
\end{equation}
where $\mu=(\mu(1),\mu(2),\cdots,\mu(m))^T$. Choose sufficient large positive constant $p$ such that
\[
\fr{p}{p+\zeta_i}\sum_{i\in\S}\pi_i\mu(i)
+\fr{\sigma_1^2(i)}{2p}+\fr{\zeta_i\mu(i)}{p+\zeta_i}<0,\;\;p>1 \;\;\mbox{and}\;\; p+\zeta_i>0 \;\;\text{for all}\;\;i\in\S.
\]
Define $C^2$-function $V(x,i)=(p+\zeta_i)x^{1/p},\;i\in\S,$ we obtain
\[\aligned
&\L V(x,i)\\
=&\fr1pV(x,i)\bl[\mu(i)+\sum_{k\in\S}q_{ik}\zeta_k-b(i)x^{\theta(i)}+\fr{1-p}{2p}\sigma^2_2(i)x^{2\gamma(i)}+\fr{\sigma_1^2(i)}{2p}-\fr{\zeta_i}{p+\zeta_i}\sum_{k\in\S}q_{ik}\zeta_k\br]\\
=&\fr1pV(x,i)\bl[\sum_{i\in\S}\pi_i\mu(i)-b(i)x^{\theta(i)}-\fr{p-1}{2p}\sigma^2_2(i)x^{2\gamma(i)}+\fr{\sigma_1^2(i)}{2p}+\fr{\zeta_i\mu(i)}{p+\zeta_i}-\fr{\zeta_i}{p+\zeta_i}\sum_{i\in\S}\pi_i\mu(i)\br]\\
\lt &-Cx^{1/p},
\endaligned\]
where $C=C(p)$ is a positive constant. Then, for any  sufficient small $\e\in(0,r)$ we have
\[
\L V(x,i)\lt -C\e^{1/p}\;\;\mbox{for any}\;x\in(\e,r)\;\mbox{and}\; i\in\S.
\]
 Thus according to Lemma 3.3 and Remark 3.5--(i) in Khasminskii et. al (2007) \cite{kh-07}, we get that the trivial solution of  model \eqref{ga}
  is  asymptotically stable in probability.
  \end{proof}

  \begin{theo}\label{thr} For $p\in(0,1)$, if $\A(p):=\text{diag}(h_1(p),\cdots,h_m(p))-Q$ is a nonsingular M-matrix, then the trivial solution of  model \eqref{ga} is $p$th moment exponentially stable, where
  $h_i(p)=\frac12p(1-p)\sigma_1^2(i)-p\, a(i)$, $i\in\S$.
  \end{theo}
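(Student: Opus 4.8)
The plan is to build a weighted power Lyapunov function out of the M-matrix hypothesis. Since $\A(p)=\text{diag}(h_1(p),\dots,h_m(p))-Q$ is a nonsingular M-matrix, by the standard characterization (cf. Mao and Yuan) there is a vector $c=(c_1,\dots,c_m)^T$ with every $c_i>0$ and $\A(p)c\gg 0$, i.e.
\[
\lambda:=\min_{i\in\S}\bl[h_i(p)c_i-\sum_{k\in\S}q_{ik}c_k\br]>0 .
\]
First I would fix such a $c$ and set $V(x,i)=c_i x^p$ for $(x,i)\in\R_+\times\S$; note $\hat c\,x^p\lt V(x,i)\lt\check c\,x^p$ with $\hat c=\min_{i\in\S}c_i>0$ and $\check c=\max_{i\in\S}c_i$.

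Next I would compute the generator of \eqref{ga} applied to $V$. Using the independence of $B_1$ and $B_2$,
\[
\L V(x,i)=c_i p\,x^p\bl[a(i)+\tfrac12(p-1)\sigma_1^2(i)\br]+x^p\sum_{k\in\S}q_{ik}c_k-c_i p\,b(i)x^{p+\theta(i)}+\tfrac12 c_i p(p-1)\sigma_2^2(i)x^{p+2\gamma(i)}.
\]
Because $p\in(0,1)$ we have $p(p-1)<0$, so the last two terms are nonpositive and may be dropped; recognizing $c_i p\bl[a(i)+\tfrac12(p-1)\sigma_1^2(i)\br]=-h_i(p)c_i$ gives
\[
\L V(x,i)\lt -x^p\bl[h_i(p)c_i-\sum_{k\in\S}q_{ik}c_k\br]\lt -\lambda x^p\lt -\fr{\lambda}{\check c}\,V(x,i)=:-\nu\,V(x,i),\qquad \nu:=\fr{\lambda}{\check c}>0 .
\]

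Then I would apply It$\hat{\text{o}}$'s formula to $e^{\nu t}V(x(t),r_t)$. Since $\L\big[e^{\nu s}V\big]=e^{\nu s}\big(\nu V+\L V\big)\lt 0$, integrating up to $t\wedge\tau_n$ (the stopping times of Theorem 2.2, which increase to $\infty$ a.s. by the global positivity of the solution established in Section 2), taking expectations to annihilate the martingale part, and letting $n\ra\infty$ via Fatou's lemma yields
\[
\E\big[e^{\nu t}V(x(t),r_t)\big]\lt V(x_0,r_0)\qquad\text{for all }t\gt 0 .
\]
Combined with $\hat c\,\E[x^p(t)]\lt \E[V(x(t),r_t)]$ this gives $\E[x^p(t)]\lt \dfrac{c_{r_0}}{\hat c}\,x_0^p\,e^{-\nu t}$, hence $\limsup_{t\ra\infty}\fr1t\log\E[x^p(t)]\lt -\nu<0$, which is exactly $p$th moment exponential stability in the sense of Definition \ref{df}--5.

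The step I expect to require the most care is the passage from the M-matrix hypothesis to the strictly positive weight vector $c$, and thus to the positive constant $\lambda$; once $c$ is available the remainder is a routine Lyapunov estimate, the only technical subtlety being the localization needed because $V(x,i)=c_i x^p$ is unbounded as $x\ra\infty$, which is handled by the stopping times $\tau_n$ and Fatou's lemma after invoking global existence and positivity.
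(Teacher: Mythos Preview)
Your proof is correct and follows essentially the same approach as the paper: extract a strictly positive vector from the nonsingular M-matrix hypothesis, use it as weights in the Lyapunov function $V(x,i)=c_i x^p$, and bound $\L V\lt -\lambda x^p$ by discarding the two nonpositive higher-order terms. The only difference is cosmetic: where the paper stops at $\L V\lt -\lambda x^p$ and invokes Theorem 5.8 of Mao--Yuan (2006), you unpack that citation by applying It\^o's formula to $e^{\nu t}V$, localizing with $\tau_n$, and passing to the limit via Fatou.
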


  \begin{rem} If $h_i(p)>0$ for all $i\in\S$, then all the row sums of $\A(p)$ are positive, thus according to Minkovski Lemma, $\det(\A (p))$ is positive. Furthermore, by the  properties of generator $Q$  we know that all the principle minors of $\A(p)$ are positive, thus according to Theorem 2.10-(2) of Mao et. al (2006) \cite{mao-06} that $\A(p)$ is a nonsingular M-matrix. Then it follows from Theorem \ref{thr} that the trivial solution of  model \eqref{ga} is $p$th moment exponentially stable, which also implies the asymptotic stability in probability.
   \par On the other hand, $h_i(p)>0$ for all $i\in\S$ imply $\mu(i)<0$ for all $i\in\S$, thus according to Theorem \ref{tth}, the trivial solution  of  model \eqref{ga} is asymptotic stability in probability.
\end{rem}

  \begin{proof} It follows from Theorem 2.10-(9) of Mao et. al (2006) \cite{mao-06} that there is a vector $\beta=(\beta_1,\cdots,\beta_m)^T>0$, i.e., $\beta_i>0$ for all $1\lt i\lt m$, such that
  \[(\overline \beta_1,\cdots,\overline\beta_m)^T:=\A (p)\beta>0.\]
  Then
  \[h_i(p)\beta_i-\sum_{j\in\S} q_{ij}\beta_j=\overline \beta_i>0\;\;\text{for}\; 1\lt i\lt m.\]
  Define the Lyapunov in the form $V(x,i)=\beta_ix^p$, then we obtain
\[
\aligned
\L V(x,i)=&\bl[p\beta_i (a(i)-b(i) x^{\theta(i)})+\frac12p(p-1)\beta_i \sigma_1^2(i)+\frac12p(p-1)\beta_i \sigma_2^2(i)x^{2\gamma(i)} +\sum_{k\in\S}q_{ik}\beta_k\br]x^p\\
\leq& \bl[\big(p\, a(i)+\frac12p(p-1)\sigma_1^2(i)\big)\beta_i +\sum_{k\in\S}q_{ik}\beta_k\br]x^p\\
=&-\bl[h_i(p)\beta_i -\sum_{k\in\S}q_{ik}\beta_k\br]x^p\lt -\lambda x^p,
\endaligned
\]
where $\lambda=\min_{1\lt i\lt m}\overline\beta_i.$ Thus according to Theorem 5.8 in Mao et. al (2006) \cite{mao-06} we get the result.
\end{proof}

\section{Weak Persistence and Extinction}
\begin{theo} \label{t31} If  $\sum_{i\in\S}\pi_i\mu(i)>0$, then
\[\liminf_{t\ra\infty}x(t)\lt x_*\lt x^*\lt\limsup_{t\ra\infty}x(t),\]
where $x^*$ and $x_*$ are the unique positive solutions of equations $f_1(x)=0$ and $f_2(x)=0$, respectively.
\end{theo}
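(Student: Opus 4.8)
The plan is to control $\log x(t)$ by It\^o's formula and read the two oscillation bounds off the averaged logarithmic growth rate. By Corollary 2.1 the solution remains in $\R_+$, so It\^o's formula applied to $\log x$ yields
\[
\log x(t)=\log x_0+\int_0^t\Big[\mu(r_s)-b(r_s)x^{\theta(r_s)}(s)-\tfrac12\sigma_2^2(r_s)x^{2\gamma(r_s)}(s)\Big]\,ds+M_1(t)+M_2(t),
\]
where $M_1(t)=\int_0^t\sigma_1(r_s)\,dB_1(s)$, $M_2(t)=\int_0^t\sigma_2(r_s)x^{\gamma(r_s)}(s)\,dB_2(s)$, so $\langle M_2\rangle_t=\int_0^t\sigma_2^2(r_s)x^{2\gamma(r_s)}(s)\,ds$. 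Three facts drive everything: (a) since $\sigma_1$ is bounded, the strong law for martingales gives $M_1(t)/t\to0$ a.s.; (b) for each fixed $\varepsilon\in(0,1)$, applying the exponential martingale inequality to $\pm M_2$ on $[0,n]$ with parameter $\varepsilon$ and then Borel--Cantelli gives an a.s.\ finite $n_0$ with $|M_2(t)|\le\tfrac{\varepsilon}{2}\langle M_2\rangle_t+\tfrac2\varepsilon\log n$ for all $t\in(n-1,n]$, $n\ge n_0$; (c) by irreducibility, $\tfrac1t\int_0^t g(r_s)\,ds\to\sum_{i\in\S}\pi_i g(i)$ a.s.\ for every $g\colon\S\to\R$, which I apply with $g(i)=\mu(i)-b(i)c^{\theta(i)}-\lambda\sigma_2^2(i)c^{2\gamma(i)}$ for fixed $c>0$, $\lambda\ge0$. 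Finally $f_1,f_2$ are strictly decreasing on $(0,\infty)$ with value $\sum_i\pi_i\mu(i)>0$ at $0^+$ and limit $-\infty$, so $x^*,x_*$ are well defined and $f_2\le f_1$ forces $x_*\le x^*$.

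To get $\liminf_{t\to\infty}x(t)\le x_*$, argue by contradiction. If $\P(\liminf_{t}x(t)>x_*)>0$ then, discretising this event, there are $\delta>0$, $T_0\in\N$ and a set $\Omega_1$ with $\P(\Omega_1)>0$ on which $x(t)\ge x_*+\delta$ for all $t\ge T_0$. Feeding the upper bound from (b) into the It\^o formula and using the formula for $\langle M_2\rangle_t$ gives, for large $t$,
\[
\log x(t)\le\log x_0+\int_0^t\Big[\mu(r_s)-b(r_s)x^{\theta(r_s)}(s)-\tfrac{1-\varepsilon}{2}\sigma_2^2(r_s)x^{2\gamma(r_s)}(s)\Big]\,ds+M_1(t)+\tfrac2\varepsilon\log n .
\]
On $\Omega_1$, for $s\ge T_0$ the integrand is at most $\mu(r_s)-b(r_s)(x_*+\delta)^{\theta(r_s)}-\tfrac{1-\varepsilon}{2}\sigma_2^2(r_s)(x_*+\delta)^{2\gamma(r_s)}$, since $y\mapsto b(i)y^{\theta(i)}+\tfrac{1-\varepsilon}{2}\sigma_2^2(i)y^{2\gamma(i)}$ is increasing; dividing by $t$ and using (a), (c),
\[
\limsup_{t\to\infty}\frac{\log x(t)}{t}\le g_\varepsilon(x_*+\delta),\qquad g_\varepsilon(y):=\sum_{i\in\S}\pi_i\Big[\mu(i)-b(i)y^{\theta(i)}-\tfrac{1-\varepsilon}{2}\sigma_2^2(i)y^{2\gamma(i)}\Big].
\]
Since $g_\varepsilon\downarrow f_2$ pointwise as $\varepsilon\downarrow0$ and $f_2(x_*)=0$, the unique positive root of $g_\varepsilon$ decreases to $x_*$, so for $\varepsilon$ small enough (depending on $\delta$) it lies below $x_*+\delta$ and $g_\varepsilon(x_*+\delta)<0$; then $\log x(t)\to-\infty$ on $\Omega_1$, i.e.\ $x(t)\to0$, contradicting $x(t)\ge x_*+\delta$. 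Hence $\liminf_{t\to\infty}x(t)\le x_*$ a.s.

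For $\limsup_{t\to\infty}x(t)\ge x^*$ I would run the mirror argument. If $\P(\limsup_{t}x(t)<x^*)>0$, fix $\delta>0$, $T_0$, $\Omega_1$ with $\P(\Omega_1)>0$ and $x(t)\le x^*-\delta$ for $t\ge T_0$. On $\Omega_1$ the path is bounded, so $\langle M_2\rangle_t\le C_\omega+Ct$ and $M_2(t)/t\to0$; the It\^o formula with (a) and, for $s\ge T_0$, the bound $b(r_s)x^{\theta(r_s)}(s)+\tfrac12\sigma_2^2(r_s)x^{2\gamma(r_s)}(s)\le b(r_s)(x^*-\delta)^{\theta(r_s)}+\tfrac12\sigma_2^2(r_s)(x^*-\delta)^{2\gamma(r_s)}$ then gives
\[
\liminf_{t\to\infty}\frac{\log x(t)}{t}\ge\sum_{i\in\S}\pi_i\Big[\mu(i)-b(i)(x^*-\delta)^{\theta(i)}-\tfrac12\sigma_2^2(i)(x^*-\delta)^{2\gamma(i)}\Big]=f_2(x^*-\delta),
\]
and one wants this to be positive, which would force $x(t)\to\infty$ on $\Omega_1$ --- a contradiction.

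The step I expect to be the main obstacle is precisely this: $f_2(x^*-\delta)$ is positive only when $x^*-\delta<x_*$, yet $\delta$ is imposed by the exceptional event and can be arbitrarily small while $x_*<x^*$. So the crude averaging only yields $\limsup_{t\to\infty}x(t)\ge x_*$, and the real content is to push the upper endpoint of the oscillation band up to the $f_1$-root $x^*$. The mechanism to exploit is that near the bottom of an excursion $x$ is small, so $\sigma_2^2x^{2\gamma}$ is negligible next to $b\,x^{\theta}$ and the effective drift of $\log x$ there is governed by $f_1$ rather than $f_2$; I would combine the bound $\liminf_t x(t)\le x_*$ just proved with a first-passage estimate for $\log x$ started near its $\liminf$-level over a single up-excursion --- comparing against the switched equation with the $B_2$-noise removed --- and let the positive $f_1$-drift carry $x$ past any level below $x^*$. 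Keeping $M_2$ under control over such an excursion without an a priori pathwise bound on $x$ is the technical heart of the proof; the $\liminf$ half is comparatively routine.
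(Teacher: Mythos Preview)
Your $\liminf$ argument is correct and is exactly the paper's Step~2 strategy --- exponential martingale inequality plus Borel--Cantelli to control $M_2$, then the ergodic theorem --- but with one essential refinement: you take the parameter $\varepsilon$ small and let $\varepsilon\downarrow 0$, so that the bound $g_\varepsilon\to f_2$ and you recover the root $x_*$. The paper instead takes $\varepsilon=1$, which cancels the $\tfrac12\sigma_2^2 x^{2\gamma}$ term entirely; the resulting upper bound is $\sum_i\pi_i[\mu(i)-\beta^{\hat\theta}b(i)(x_*)^{\theta(i)}]$, an $f_1$-type quantity, and the paper's asserted identity $(1-\beta^{\hat\theta})\sum_i\pi_i\mu(i)$ tacitly uses $f_1(x_*)=0$, which is false whenever $\sigma_2\not\equiv 0$ (one only has $f_1(x_*)\ge f_2(x_*)=0$). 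So your $\varepsilon\to0$ device is not a cosmetic change; it is what actually closes the argument at $x_*$.

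For the $\limsup$ half you have correctly located the obstruction. The paper does \emph{not} use any excursion or first-passage comparison; it runs the direct mirror argument you describe (on $\{x\le \alpha x^*\}$, bound the drift, use $\langle M_2\rangle_t\lesssim t$ so $M_2(t)/t\to0$) and arrives at
\[
\liminf_{t\to\infty}\tfrac1t\log x(t)\ \ge\ \sum_{i}\pi_i\Big[\mu(i)-\alpha^{\hat\theta\wedge 2\hat\gamma}\big(b(i)(x^*)^{\theta(i)}+\tfrac12\sigma_2^2(i)(x^*)^{2\gamma(i)}\big)\Big].
\]
The paper then asserts this is $\ge (1-\alpha^{\hat\theta\wedge 2\hat\gamma})\sum_i\pi_i\mu(i)$, but that inequality is equivalent to $f_2(x^*)\ge 0$, which fails for $\sigma_2\not\equiv 0$ since $f_2(x^*)<f_1(x^*)=0$. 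In other words, the very difficulty you flag --- that the naive averaging only reaches the $f_2$-root $x_*$, not the $f_1$-root $x^*$ --- is glossed over in the paper by a wrong final inequality. Your excursion idea is aimed at a genuine gap, but as written it is only a heuristic; if you want the sharp bound $\limsup x(t)\ge x^*$ you will indeed need an argument that separates the $b\,x^{\theta}$ drift from the $\sigma_2^2 x^{2\gamma}$ contribution, and neither your sketch nor the paper's proof currently supplies one.
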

\begin{rem}  $x^*\gt x_*>0$ can be obtained from the fact that $f_1\gt f_2$ and $f_2(0)>0$.
Theorem \ref{t31} shows that solutions of  model \eqref{ga} will oscillate infinitely often from $x_*$ up to $ x^*,$ and the amplitude is no less than
 $A=x^*-x_*$, which may be decreasing with the decreasing of $\sigma_2^2.$ If $\sigma_2\not=0,$ then $A>0$, and then  $\liminf_{t\ra\infty}x(t)\lt x_*< x^*\lt\limsup_{t\ra\infty}x(t).$ Moreover, this theorem implies the weak persistence of the species because of  $\limsup_{t\ra\infty}x(t)\gt x^*>0.$
\end{rem}
\begin{rem}
If $\sigma_2(r_t)\equiv0$, then $f_1(x)=f_2(x)$, $x^*= x_*$ and the equalities hold, which is the case of Theorem 3.1 in Settati et. al (2015) \cite{Sa-15}. Thus our result is an extension of it.
\end{rem}
\begin{proof}
One can see that $f_j$, $j=1,2$ are continuous and strictly decreasing on $\R_+$ and
\[
f_j(0^+)=\sum_{i\in\S}\pi_i\mu(i)>0,\;\;f_j(\infty)=-\infty.
\]
So there exist unique positive solutions $x^*$, $x_*$($x_*<x^*$), such that $f_1(x^*)=0$, $f_2(x_*)=0$, respectively.
\par \textbf{Step 1.} Assume $\P(\omega\in\Omega,\;\limsup_{t\ra\infty} x(t,\omega)<x^*)>0,$ then there exists a positive constant $\alpha\in(1/2,1)$ such that
$\P(\Omega_1)>0,$ where $\Omega_1=\{\omega\in\Omega,\;\limsup_{t\ra\infty} x(t,\omega)<(2\alpha-1)x^*\}.$ So for every $\omega\in\Omega_1,$ there is a $T(\omega)>0$ such that
\[
x(t)\lt (2\alpha-1)x^*+(1-\alpha)x^*=\alpha x^*\;\;
 \mbox{for all}\;\; t\gt T(\omega).\]
Then it follows from  model \eqref{ga}  that
\begin{equation}\aligned\label{wl}
\log x(t)
\gt& \log x_0+\int_0^t\mu(r_s)dt-\int_0^T\left[b(r_s)x^{\theta(r_s)}+\fr12\sigma_2^2(r_s)x^{2\gamma(r_s)}\right]dt\\
&-\alpha^{\hat\theta\wedge2\hat\gamma}\int_T^t\left[b(r_s)( x^*)^{\theta(r_s)}+\fr12\sigma_2^2(r_s)(x^*)^{2\gamma(r_s)}\right]dt+M_1(t)+M_2(t),
\endaligned
\end{equation}
where $M_1(t)=\int_0^t\sigma_1(r_s)dB_1(t),\;M_2(t)=\int_0^t\sigma_2(r_s)x^{\gamma(r_s)} dB_2(t).$
Notice that $M_i(t)$ are real valued continuous local martingale with the quadratic variations:
\[
\langle M_1(t), M_1(t)\rangle=\int_0^t\sigma_1^2(r_s)ds\lt \check {\sigma_1^2}t,
\]
and
\[
\langle M_2(t), M_2(t)\rangle\lt\alpha^{2\check\gamma} \big[(x^*)^{2\hat\gamma}+(x^*)^{2\check\gamma}\big]\int_0^t\sigma_2^2(r_s)ds\lt \check {\sigma_2^2}\alpha^{2\check\gamma} \big[(x^*)^{2\hat\gamma}+(x^*)^{2\check\gamma}\big]t.
\]
Thus by the large number theorem for martingales and the ergodic theory of the Markov chain, we obtain from inequality \eqref{wl} that there is $\Omega_1'\subset\Omega$ such that $\P(\Omega_1')=1$, and for every $\omega\in\Omega_1'$,
\[
\aligned
\liminf_{r\ra \infty}\fr1t\log x(t)&\gt \sum_{i\in \S}\pi_i\left\{\mu(i)-\alpha^{\hat\theta\wedge2\hat\gamma}\biggl[b(i)(x^*)^{\theta(i)}+\fr{\sigma_2^2(i)}2(x^*)^{2\gamma(i)}\biggr]\right\}\\
&\gt (1-\alpha^{\hat\theta\wedge2\hat\gamma})\sum_{i\in \S}\pi_i\mu(i)>0,
\endaligned
\]
which implies $\lim_{t\ra\infty} x(t)=\infty$. But this is a contradiction. Thus
\[
x^*\lt\limsup_{t\ra\infty} x(t).
\]
\par \textbf{Step 2.} Assume $\P(\omega\in\Omega,\;\liminf_{t\ra\infty} x(t,\omega)>x_*)>0$, then  there exists a constant $\beta>1$ such that $\P(\Omega_2)>0$, where $\Omega_2=\{\omega\in \Omega,\;\liminf_{t\ra\infty} x(t,\omega)\gt (2\beta-1)x_*\}$.
Thus, for every $\omega\in\Omega_2$, there exists a $T_1(\omega)>0$ such that
\[x(t)\gt (2\beta-1)x_*-(\beta-1)x_*=\beta x_*\;\;\mbox{for}\;\; t\gt T_1(\omega).\]
Then we have
\begin{equation}\label{logeq}
\aligned
\log x(t)=&\log x_0+\int_0^t\mu(r_s)dt-\int_0^{t}\left[b(r_s)x^{\theta(r_s)}+\fr12\sigma_2^2(r_s)x^{2\gamma(r_s)}\right]dt+M_1(t)+M_2(t).
\endaligned
\end{equation}
It follows from the exponential martingale inequality, see Applebaum (2009)\cite{ad-09},
that for any positive numbers $T$, $\e$ and $\delta$,
\[
\P\{\sup_{t\in[0,T]}[M_2(t)-\fr\e2\langle M_2(t), M_2(t)\rangle]>\delta\}\lt e^{-\e\delta}.
\]
Let $T=T_1$, $\e=1$ and $\delta=2\log T_1$, then we have
\[
\P\{\sup_{t\in[0,T_1]}[M_2(t)-\fr\e2\langle M_2(t), M_2(t)\rangle]>2\log T_1\}\lt \fr1{T_1^2}.
\]
By the Borel-Cantelli's Lemma,  for almost all $\omega\in\Omega$, there is a random integer $n_0=n_0(\omega)$ such that
\[
\sup_{t\in [0,T_1]}\big[M_2(t)-\fr12\langle M_2(t), M_2(t)\rangle\big]\lt 2\log T_1\;\;\mbox{for all}\;\; T_1\gt n_0\;\;\mbox{a.s.},
\]
which yields
\[
M_2(t)\lt 2\log T_1+\fr12 \int_0^t\sigma_2^2(r_s)x^{2\gamma(r_s)}ds\;\;\mbox{for all}\;\; t\in[0,T_1]\;\;\mbox{with}\;\; T_1\gt n_0\;\;\mbox{a.s.}
\]
Inserting the  inequality into \eqref{logeq} gives
\[
\aligned
\log x(t)\lt &\log x_0+\int_0^t\mu(r_s)dt-\beta^{\hat\theta}\int_{T_1}^{t}b(r_s)(x_*)^{\theta(r_s)}dt-\int_0^{T_1}b(r_s)x^{\theta(r_s)}dt+M_1(t) +2\log T_1.
\endaligned
\]
Similarly, by using the large number theorem for martingale $M_1(t)$ and the ergodic theory of the Markov chain, we obtain
\[\aligned
\limsup_{t\ra\infty}\fr1t
\log x(t)&\lt \sum_{i\in\S}\pi_i\big[\mu(i)-\beta^{\hat\theta}b(i)(x_*)^{\theta(i)}\big]=\big(1-\beta^{\hat\theta}\big)\sum_{i\in\S}\pi_i\mu(i)<0,
\endaligned
\]
which leads to a contradiction.  The proof is now completed.
\end{proof}
\begin{cor}\label{c4.1}
For any $(x_0,r_0)\in \R_+\times\S$, the solution $x(t)$ of  model \eqref{ga} has the following property:
\[\limsup_{t\ra\infty}\fr1t\log x(t)\lt\sum_{i\in\S}\pi_i\mu(i)\;\;a.s.\]
Moreover, if $\sum_{i\in\S}\pi_i\mu(i)<0,$ then the species $x(t)$ will be extinctive a.s.
\end{cor}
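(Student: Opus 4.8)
The plan is to apply It\^o's formula to $\log x(t)$, discard the two manifestly non-positive drift terms, and control the two resulting stochastic integrals.

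Since the solution stays in $\R_+$ for all $t\gt0$ a.s.\ (cf.\ Section~2), It\^o's formula gives, exactly as in \eqref{logeq},
\[
\aligned
\log x(t)=&\,\log x_0+\int_0^t\bl[\mu(r_s)-b(r_s)x^{\theta(r_s)}-\fr12\sigma_2^2(r_s)x^{2\gamma(r_s)}\br]ds\\
&\,+M_1(t)+M_2(t),
\endaligned
\]
with $M_1(t)=\int_0^t\sigma_1(r_s)dB_1(s)$ and $M_2(t)=\int_0^t\sigma_2(r_s)x^{\gamma(r_s)}(s)dB_2(s)$. Since $b(i)\gt0$, $\sigma_2^2(i)\gt0$ and $x>0$, the drift integrand never exceeds $\mu(r_s)$; I nonetheless keep the term $-\fr12\int_0^t\sigma_2^2(r_s)x^{2\gamma(r_s)}ds$, since it is needed to tame $M_2$.

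The martingale $M_1$ satisfies $\langle M_1,M_1\rangle_t=\int_0^t\sigma_1^2(r_s)ds\lt\check{\sigma_1^2}\,t$, so the large number theorem for martingales yields $M_1(t)/t\ra0$ a.s. For the local martingale $M_2$, whose quadratic variation need not be bounded, I would reuse the exponential martingale inequality argument from Step~2 of the proof of Theorem~\ref{t31}: fixing $\e\in(0,1)$, for every integer $n$
\[
\P\bl\{\sup_{0\lt t\lt n}\bl[M_2(t)-\fr{\e}{2}\langle M_2,M_2\rangle_t\br]>\fr{2}{\e}\log n\br\}\lt\fr1{n^2},
\]
so the Borel--Cantelli lemma provides a random $n_0(\omega)$ with $M_2(t)\lt\fr{\e}{2}\int_0^t\sigma_2^2(r_s)x^{2\gamma(r_s)}ds+\fr{2}{\e}\log n$ for all $0\lt t\lt n$, $n\gt n_0$. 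Inserting this bound and observing that the leftover term $-\fr{1-\e}{2}\int_0^t\sigma_2^2(r_s)x^{2\gamma(r_s)}ds$ is non-positive because $\e<1$, we get, for $n-1\lt t\lt n$ and $n\gt n_0$,
\[
\log x(t)\lt\log x_0+\int_0^t\mu(r_s)ds+M_1(t)+\fr{2}{\e}\log n.
\]
Dividing by $t$, letting $t\ra\infty$ with $n=\lceil t\rceil$ (so $(\log n)/t\ra0$), and using $M_1(t)/t\ra0$ together with the ergodic theorem for the irreducible chain, $\fr1t\int_0^t\mu(r_s)ds\ra\sum_{i\in\S}\pi_i\mu(i)$ a.s., we arrive at $\limsup_{t\ra\infty}\fr1t\log x(t)\lt\sum_{i\in\S}\pi_i\mu(i)$ a.s.

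For the last assertion, if $\sum_{i\in\S}\pi_i\mu(i)<0$ then $\limsup_{t\ra\infty}\fr1t\log x(t)$ is bounded above by a negative number, whence $\log x(t)\ra-\infty$, i.e.\ $x(t)\ra0$ a.s., which is extinction in the sense of Definition~\ref{df}(1). The only genuinely delicate point is the control of $M_2$, whose quadratic variation is not a priori bounded; this is resolved by the exponential martingale inequality exactly as in the proof of Theorem~\ref{t31}, and everything else is the routine ``$\log x(t)$'' estimate together with the ergodic theorem.
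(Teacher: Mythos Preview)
Your proposal is correct and follows exactly the route implicit in Step~2 of the proof of Theorem~\ref{t31} (the paper states the corollary without a separate proof): It\^o on $\log x$, the exponential martingale inequality plus Borel--Cantelli to control $M_2$, the strong law for $M_1$, and the ergodic theorem for $\fr1t\int_0^t\mu(r_s)ds$. The only cosmetic difference is that the paper takes $\e=1$ in the exponential martingale inequality, so that $\fr{\e}{2}\langle M_2\rangle_t$ exactly cancels $-\fr12\int_0^t\sigma_2^2(r_s)x^{2\gamma(r_s)}ds$, whereas you take $\e\in(0,1)$ and then discard the leftover non-positive term; either choice yields the same bound.
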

\section{Stationary Distribution}
\begin{theo}\label{th} If $\theta(i)+1\gt2\gamma(i)$ for each $i\in\S$ and $\sum_{i\in\S}\pi_i\mu(i)>0,$ then, for any $(x_0,r_0)\in\R_+\times\S,$ the  solution $x(t)$ of model \eqref{ga} is positive recurrent and admits a unique ergodic  asymptotically invariant distribution $\nu$. Moreover, the species described by model \eqref{ga} is stochastically permanent.
\end{theo}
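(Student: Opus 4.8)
The plan is to construct a single Lyapunov function that simultaneously verifies the drift condition of the ergodic theorem for switching diffusions and yields uniform-in-time moment bounds. By Corollary 2.1 the pair $(x(t),r_t)$ is a regular, time-homogeneous Markov process on $\R_+\times\S$; for each fixed $i$ the diffusion coefficient of the $x$-component equals $\sigma_1^2(i)x^2+\sigma_2^2(i)x^{2+2\gamma(i)}$, which is strictly positive on $\R_+$, so the hybrid process is non-degenerate, it is Feller (smooth coefficients, no explosion), and $Q$ is irreducible by Assumption. Hence, by the ergodic theory of switching diffusions (Khasminskii et al. (2007) \cite{kh-07}; see also Zhu et al. (2007) \cite{zhu-07}), it suffices to exhibit a compact set $D=[\delta_0,N_0]\subset\R_+$ and, for each $i\in\S$, a nonnegative $C^2$ function $V(\cdot,i)$ with $V(x,i)\ra\infty$ as $x\ra0^+$ or $x\ra\infty$ and $\L V(x,i)\lt-1$ for all $x\notin D$, $i\in\S$; from this, positive recurrence and the existence of a unique ergodic, asymptotically invariant distribution $\nu$ follow.

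Since $\bar\mu:=\sum_{i\in\S}\pi_i\mu(i)>0$, let $\xi=(\xi_1,\dots,\xi_m)^T$ solve the Poisson equation $Q\xi=\mu-\bar\mu\,\mathbf{1}$ (solvable because $\mu-\bar\mu\,\mathbf{1}$ is orthogonal to $\pi$). Fix a small $q>0$, set $c_i=1+q\xi_i>0$, and define $V(x,i)=c_i\big(x+x^{-q}\big)$, which is bounded below by a positive constant and tends to $\infty$ as $x\ra0^+$ or $x\ra\infty$. Using $\sum_k q_{ik}=0$, a generator computation gives for $(x,i)\in\R_+\times\S$
\[
\L V(x,i)=c_i\big[a(i)x-b(i)x^{1+\theta(i)}\big]+c_i\big[-q\mu(i)x^{-q}+qb(i)x^{\theta(i)-q}+\tfrac12 q(q+1)\sigma_2^2(i)x^{2\gamma(i)-q}+\tfrac{q^2}{2}\sigma_1^2(i)x^{-q}\big]+\big(x+x^{-q}\big)\sum_{k\in\S}q_{ik}c_k.
\]
As $x\ra0^+$ the only unbounded term is $x^{-q}\big[-qc_i\mu(i)+\tfrac{q^2}{2}\sigma_1^2(i)c_i+\sum_k q_{ik}c_k\big]$; inserting $c_i=1+q\xi_i$ and $\sum_k q_{ik}\xi_k=\mu(i)-\bar\mu$ turns the bracket into $-q\bar\mu+O(q^2)<0$, uniformly in $i$, once $q$ is small. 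As $x\ra\infty$ the dominant negative term is $-c_ib(i)x^{1+\theta(i)}$, while every positive contribution grows no faster than $x^{\max\{1,\theta(i)-q,2\gamma(i)-q\}}$; since $\theta(i)+1>2\gamma(i)$ for every $i$ we have $1+\theta(i)>1$, $1+\theta(i)>\theta(i)-q$ and $1+\theta(i)>2\gamma(i)>2\gamma(i)-q$, so the negative term dominates and $\L V(x,i)\ra-\infty$, uniformly in $i\in\S$. Combining the two regimes, there exist $K>0$ and $\lambda>0$ with $\L V(x,i)\lt K-\lambda V(x,i)$ on $\R_+\times\S$, and in particular $\L V(x,i)\lt-1$ off a compact $D=[\delta_0,N_0]$, which gives the positive recurrence and the unique ergodic asymptotically invariant distribution $\nu$.

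For the stochastic permanence I use the same $V$. Applying It\^o's formula to $e^{\lambda t}V(x(t),r_t)$, with a localizing sequence of stopping times to bypass the a priori lack of integrability and then Fatou's lemma, the inequality $\L V\lt K-\lambda V$ yields $\limsup_{t\ra\infty}\E[V(x(t),r_t)]\lt K/\lambda$, hence with $c_-=\min_i c_i>0$
\[
\limsup_{t\ra\infty}\E\big[x^{-q}(t)\big]\lt \frac{K}{\lambda c_-}=:H_1,\qquad \limsup_{t\ra\infty}\E\big[x(t)\big]\lt \frac{K}{\lambda c_-}=:H_2.
\]
Given $\e\in(0,1)$, Chebyshev's inequality with $\beta=(\e/(2H_1))^{1/q}$ gives $\limsup_t\P\{x(t)\lt\beta\}\lt H_1\beta^q<\e$, i.e. $\liminf_t\P\{x(t)\gt\beta\}\gt1-\e$, and with $\alpha=2H_2/\e$ gives $\limsup_t\P\{x(t)\gt\alpha\}\lt H_2/\alpha<\e$, i.e. $\liminf_t\P\{x(t)\lt\alpha\}\gt1-\e$; this is precisely stochastic permanence.

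The crux is the global drift estimate $\L V(x,i)\lt K-\lambda V(x,i)$. The behaviour near $x=0$ forces $q$ (and the rate $\lambda\sim q\bar\mu/2$) to be chosen small uniformly over the finitely many states, and it is the hypothesis $\sum_i\pi_i\mu(i)>0$ that makes the limiting coefficient negative through the Poisson vector $\xi$. The behaviour near $x=\infty$ is where $\theta(i)+1>2\gamma(i)$ is indispensable: the linear term $x$ carries no It\^o correction, so the only decaying term is $-b(i)x^{1+\theta(i)}$, and it must outweigh the growth $x^{2\gamma(i)-q}$ produced by the It\^o correction of $x^{-q}$, which requires exactly $2\gamma(i)-q<\theta(i)+1$. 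The remaining points — the passage from the differential inequality to the $\limsup$ moment bound via stopping and Fatou, and the verification that the switching-diffusion ergodic theorem genuinely applies on the non-compact state space $\R_+\times\S$ (regularity, Feller property, non-degeneracy, irreducibility) — are routine but should be recorded with care.
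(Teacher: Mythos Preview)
Your argument is correct and, for the ergodicity part, essentially identical to the paper's: both use a Lyapunov of the form $c_i x^{-q}+x$ with $c_i=1+q\xi_i$ coming from the same Poisson equation (the paper writes it as $(1-p\zeta_i)x^{-p}+x$ with $Q\zeta=-\mu+\bar\mu\mathbf{1}$, i.e.\ $\zeta=-\xi$), and both exploit $\theta(i)+1\ge 2\gamma(i)$ to make $-b(i)x^{1+\theta(i)}$ dominate at infinity and $\bar\mu>0$ (via the Poisson vector) to make the $x^{-q}$ coefficient negative at the origin. The genuine difference is in how stochastic permanence is extracted. You sharpen the drift to the geometric form $\L V\le K-\lambda V$, pull uniform moment bounds $\sup_t\E x^{-q}<\infty$ and $\sup_t\E x<\infty$ from $e^{\lambda t}V$, and finish with Chebyshev; the paper instead stops at $\L V\le -K$ off a compact, invokes Zhu--Yin to get the invariant measure $\nu$, and then reads off permanence directly from $\nu(\R_+)=1$ together with convergence in distribution, choosing $k$ small so that $\nu([k,1/k])>1-\e$. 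Your route is self-contained and yields explicit quantitative tail bounds (and would work even before one has established uniqueness of $\nu$); the paper's route is shorter once the ergodic theorem is in hand, but gives no moment information.
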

\begin{rem} One can see that  our condition $\theta(i)+1\gt2\gamma(i)$ for each $i\in \S$  holds for large $\theta(i)$,  and $\gamma(i)$ is allowed to be bigger than $1$ while $\theta(i)$ is larger than $1$. Meanwhile, if for $i\in\S$, $\gamma(i)=\theta(i)$ and $\theta(i)\in(0,1]$(which is assumed to guarantee the positive recurrent and the existence of a unique ergodic asymptotically invariant distribution in Liu et. al (2015)\cite{liu-15}), then $\theta(i)+1\gt2\gamma(i)$ holds.  For the case of  no switching in the GA parameters $\theta$ and $\gamma$, under the conditions that $\gamma\in(0,1]$ and $\theta\in (0,1+\gamma]$, the stochastic  permanence  is obtained  in   Liu et. al (2011, 2012) \cite{liu-11,liu-12-a}. Therefore, our conditions improve the corresponding ones in these literatures.
\end{rem}

\begin{proof}
Define a Lyapunov function in the form:
\[
V(x,i)=(1-p\zeta_i)x^{-p}+x,
\]
where $p$ is positive number satisfying $1>p\max_{i\in \S}\{\zeta_i\}$,  
and $\zeta=(\zeta_1,...,\zeta_m)^T$ is a solution of the
Poisson system \eqref{ps}. Then  we have
\begin{equation}\label{lv}
\aligned
\L V(x,i)=&-p(1-p\zeta_i)x^{-p}\big[a(i)-b(i)x^{\theta(i)}\big]+\fr12p(p+1)(1-p\zeta_i)\sigma_1^2(i) x^{-p}\\
&+\fr12p(p+1)(1-p\zeta_i)\sigma_2^2(i) x^{2\gamma(i)-p}-px^{-p}\sum_{k\in \S}q_{ik}\zeta_k+x\big[a(i)-b(i)x^{\theta(i)}\big]\\
=&-p(1-p\zeta_i)x^{-p}\bl[a(i)-\fr12(p+1)\sigma_1^2(i)+\fr{1}{1-p\zeta_i} \sum_{k\in \S}q_{ik}\zeta_k\br]\\
&+x\big[a(i)-b(i)x^{\theta(i)}\big]+p(1-p\zeta_i)b(i)x^{\theta(i)-p}+\fr12p(p+1)(1-p\zeta_i)\sigma_2^2(i) x^{2\gamma(i)-p}\\
=&-p(1-p\zeta_i)x^{-p}\bl[\sum_{i\in\S}\pi_i\mu(i)-\fr12p\,\sigma_1^2(i)+\fr{ p\zeta_i}{1-p\zeta_i}\sum_{k\in \S}q_{ik}\zeta_k \br]+x\big[a(i)-b(i)x^{\theta(i)}\big]\\
&+p(1-p\zeta_i)b(i)x^{\theta(i)-p}+\fr12p(p+1)(1-p\zeta_i)\sigma_2^2(i) x^{2\gamma(i)-p}.
\endaligned
\end{equation}
Let $U(N)=(1/N,N)\subset \mathbb{R}_+$. In view of $\sum_{i\in\S}\pi_i\mu(i)>0$,  we can choose $p$ sufficient small such that
\[
\sum_{i\in\S}\pi_i\mu(i)-\fr12p\,\sigma_1^2(i)+\fr{ p\zeta_i}{1-p\zeta_i}\sum_{k\in \S}q_{ik}\zeta_k>0,
\]
and note that $\theta(i)+1\gt 2\gamma(i)$ for each $i\in \S$, then we have
\[\L V(N,i)\ra -\infty$, as $N\ra \infty.\] Thus, for any given positive constant $K$, there exists a sufficient large $N_0$ such that
\[\L V(x,i)\lt -K\;\mbox{ for all }\; x\in U^c(N_0).\] Then according to the Theorems 3.13 and 4.3 in Zhu et. al (2007) \cite{zhu-07}
or Theorem 4.1 in Settati et. al (2015) \cite{Sa-15}
we obtain the first part of the theorem.
\par Now we prove the second part. By the ergodicity of $x(t)$, we have
\[
\fr1t\int_0^tI_{\{x(s)\in \R_+\}}ds\ra\int_0^\infty I_{\{\in \R_+\}}(x)\pi(dx)=\nu(\R_+),
\]
which together with $x(t)\in \R_+$ yields $\nu(\R_+)=1.$ It follows from the asymptotically invariant distribution of $x(t)$ that, for positive constants $\alpha$ and $\beta$,
\[\liminf_{t\ra\infty}\P(x(t)\gt\alpha)=\nu\big([\alpha,\infty)\big)\;\;\text{and}\;\;\liminf_{t\ra\infty}\P(x(t)\lt\beta)=\nu\big((0,\beta)\big).\]
Thus
\[\lim_{\alpha\ra0^+}\nu([\alpha,\infty))=\lim_{\beta\ra\infty}\nu((0,\beta))=\nu(\R_+)=1.\]
Then  for any $\e\in (0,1) $ there exists a sufficient small positive constant $k$ such that
\[
\liminf_{t\ra\infty}\P(x(t)\gt k)\gt1-\e$ and $\liminf_{t\ra\infty}\P(x(t)\lt 1/k)\gt1-\e.
\]
This proof is now completed.
\end{proof}
\begin{rem} From Eq.\eqref{lv} in the  proof we find
that the condition $\theta(i)+1\gt2\gamma(i)$ for each $i\in\S$ can be removed from Theorem \ref{th} in the following three cases:
\par  i) \  For all $i\in\S$, $\sigma_2(i)=0$,  then model \eqref{ga} reduces to model \eqref{sa}, and the condition $\theta(i)+1\gt2\gamma(i)$ for each $i\in\S$ can be removed,  then we get the same result as that in Settati et. al (2015) \cite{Sa-15}.
\par  ii)\   For all $i\in\S$, $\sigma_1(i)=0$, and there is no switching in the GA parameters, that is for all $i\in\S$, $\theta(i)=\theta=const.$, and $\gamma(i)=\gamma=const.$, then $\mu(i)=a(i)$. In this case we can choose sufficient large $p$ such that $\theta+1>2\gamma-p>0,$ and thus the condition $\theta(i)+1\gt2\gamma(i)$ for each $i\in\S$ can also be removed;
\par iii) \ For all $i\in\S$, $\gamma(i)=\theta(i)$ and $\theta(i)\in (0,1],$ which are presented in Liu et. al (2005)\cite{liu-15}. In this case, the condition $\theta(i)+1\gt2\gamma(i)$ holds for each $i\in\S$.
\end{rem}

\section {Examples}
In order to verify the theoretical results obtained in previous sections, we give the following two examples. The numerical method used here is Milstein’s Higher Order Method, see Higham (2001) \cite{Higham-01} for more details.
\par Set the states space of Markov chain $r_t$ by $\S=\{1,\,2,\,3,\,4\}$, and its  generator $Q$ by
\[Q=\left(
      \begin{array}{cccc}
      -10 & 3 & 2 & 5 \\
        6 & -9 & 2 & 1\\
        3 & 3 & -8 & 2\\
        1 & 5 & 3 & -9\\
      \end{array}
    \right)\]
   Then its one step transition probability matrix $P$ is as follows,
    \[
P=\exp(\Delta \cdot Q)=\left(
      \begin{array}{cccc}
       0.9990 &   0.0003 &   0.0002 &   0.0005\\
       0.0006 &   0.9991 &   0.0002 &   0.0001\\
       0.0003 &   0.0003 &   0.9992 &   0.0002\\
       0.0001 &   0.0005 &   0.0003 &   0.9991\\
      \end{array}
    \right),
 \]
 where $\Delta=1e-4$ is the step, and its stationary distribution is
\[\pi=( 0.2622,\,    0.2879,\,    0.2227,\,    0.2272).\]
The computer simulation of the Markov chain is shown in Figure 1.

\begin{center}
{\includegraphics[scale=0.8]{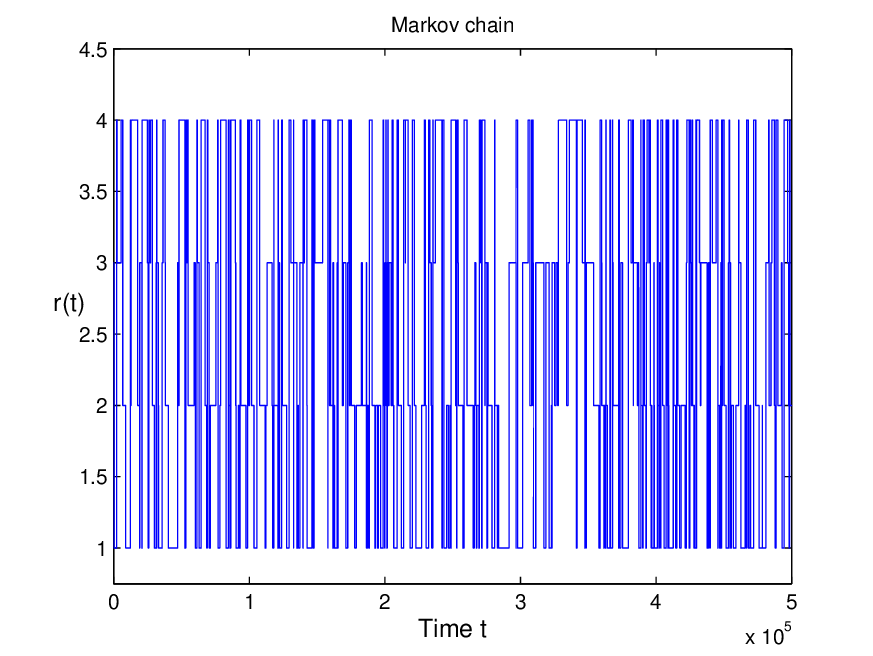}}\\
\footnotesize {\textbf{Figure 1.}\;\;
The simulation of Markov chain $(r_t)_{t\geqslant0}$ with initial state $r_0=3$ and step $\Delta=1e-4$.}
\end{center}

\vspace{2mm}
\par \textbf{Example 1.} Let
\[
\aligned
&a=(0.4,\,0.3,\,0.6,\,0.55),\;b=(0.15,\,0.2,\,0.13,\,0.4),\;\sigma_1=(0.3,\,0.2,\,1.4,\, 0.5),\\
&\sigma_2=(0.13,\,0.21,\,0.11,\,0.24),\,\;\theta=(1.5,\, 0.5,\, 1,\, 0.7),\,\; \gamma=(1.2,\, 0.6,\, 1,\, 0.8).
\endaligned
\]
 Then $\mu=(0.3550,\,    0.2800,\,   -0.3800,\,   0.4250)$ and $\sum_{i\in\S}\pi_i\mu(i)= 0.1856>0,$ and $x_*=0.7681,\;x^*=0.8280$.
 \par Thus according to Theorems \ref{t31}, \ref{th} and Definition \ref{df}, we obtain the weak persistence and the stochastic permanence of the species $x$ of model (\ref{ga}). The evolution of $x(t)$ along with the Markov chain $(r_t)_{t\geqslant0}$ is simulated in Figure 2, and the evolutions of $x(t)$ with $r_t\equiv i$, $i\in \S$ are shown in Figure 3.  The density and distribution of $x(t)$ along with the Markov chain $(r_t)_{t\geqslant0}$ are shown in Figure 4.

 \begin{center}
\scalebox{0.8}{\includegraphics{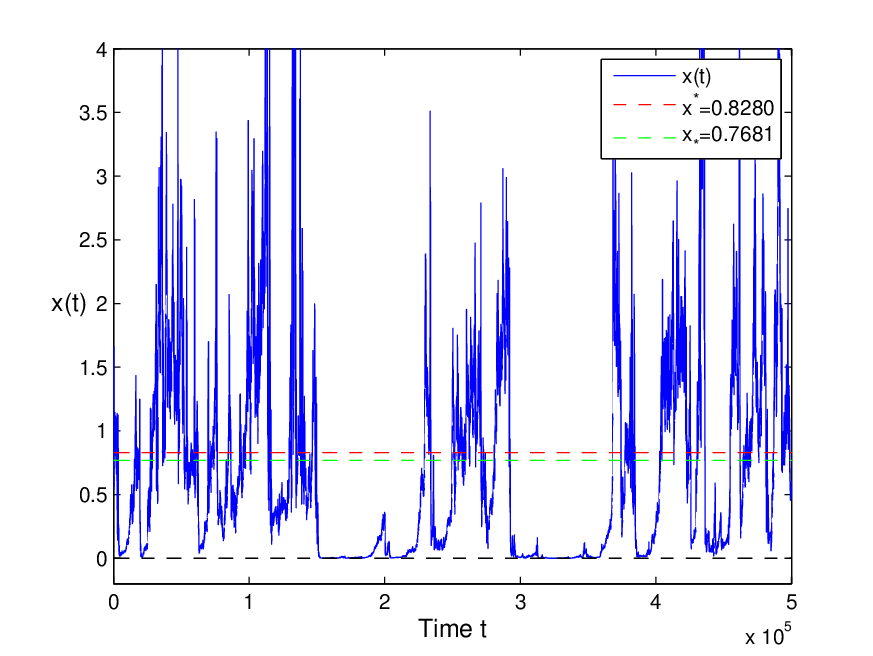}}\\
\footnotesize {\textbf{Figure 2.}\;\;
The evolution  of $x(t)$ along with Markov chain $(r_t)_{t\geqslant0}$ with initial value $x(0)=1$ and step $\Delta=1e-3$, and the unique positive solutions of $f_i(x)=0(i=1,2)$ are given by $x_*$ and $x^*$, respectively.}
\end{center}

\begin{center}
\scalebox{0.8}{\includegraphics{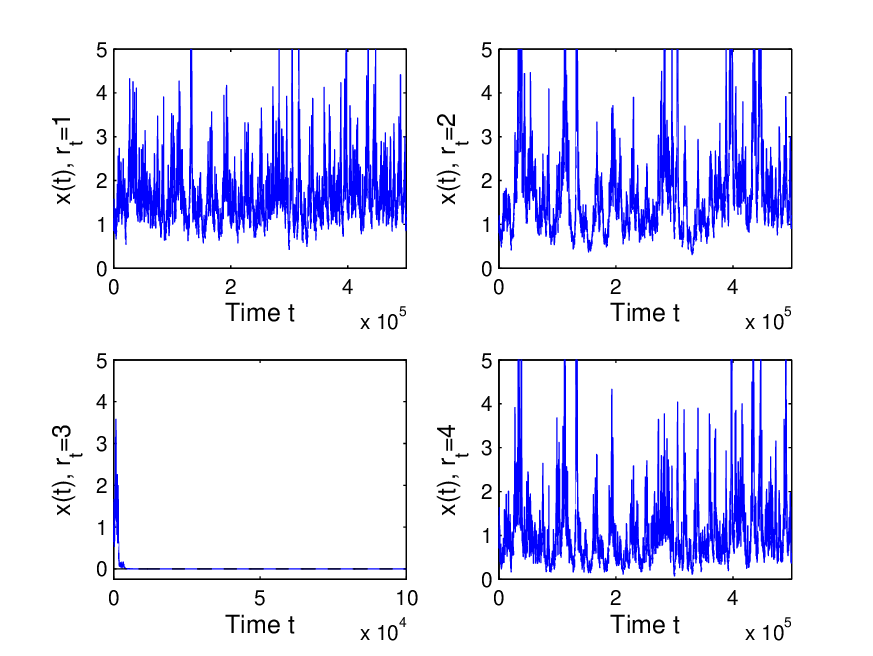}}\\
\footnotesize {\textbf{Figure 3.}\;\;
The evolution  of $x(t)$ for $r_t\equiv i(i\in\S),$ respectively, with initial value $x(0)=1$ and step $\Delta=1e-3$.
These figures show that $x(t)$ with $r_t\equiv3$ is stable and vanish finally(left down figure), but the other states are unstable.}
\end{center}
\begin{center}
\scalebox{0.8}{\includegraphics{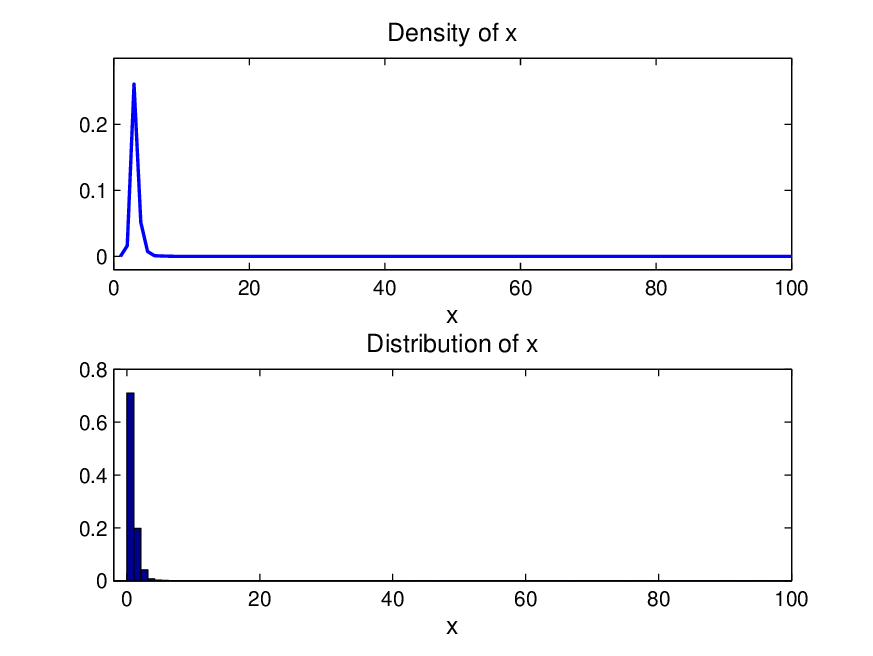}}\\
\footnotesize {\textbf{Figure 4.}\;\;
The density and distribution of $x(t)$ along with Markov chain $(r_t)_{t\geqslant0}$, where $x(t)$ with initial value $x(0)=1$ and  step $\Delta=1e-3$.}
\end{center}

\vspace{2mm}
\par From Figure 3, we find that the solution $x(t)$ of model (\ref{ga}) with $r_t\equiv3$ goes to zero, that is the trivial solution of model (\ref{ga}) with $r_t=3$ is stable, but the other three states of it are unstable. However, the solution $x(t)$ along with the Markov chain $(r_t)_{t\geqslant0}$, which shown in Figure 2 is unstable finally because of $\sum_{i\in\S}\pi_i\mu(i)>0$.

\vspace{2mm}
\par \textbf{Example 2.} Replace $\sigma_1(4)=0.5$ by $\sigma_1(4)=1.8$ in Example 1. Then we have \[\mu=(0.3550,\,    0.2800,\,   -0.3800,\,    -1.0700)\;\;\mbox{and}\;\; \sum_{i\in\S}\pi_i\mu(i)=-0.1540<0.\]
Hence, it follows from  Theorem \ref{tth} and Corollary \ref{c4.1} that the trivial solution to model (\ref{ga}) is asymptotically stable in probability, and the species $x$ will go to be extinctive finally. The computer simulation results of this example are given by Figures 5-6.
\begin{center}
\scalebox{0.8}{\includegraphics{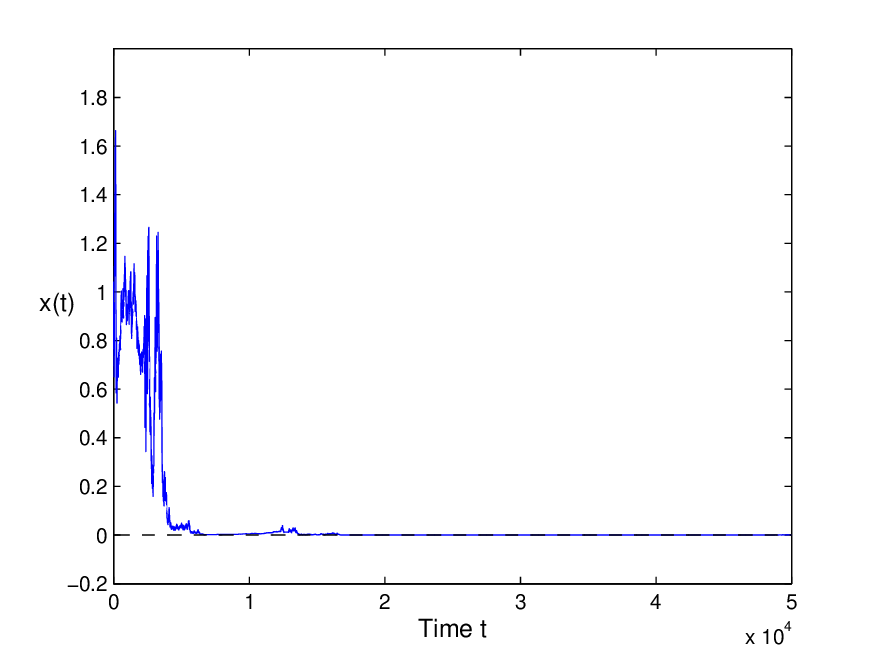}}\\
\footnotesize {\textbf{Figure 5.}\;\;
The evolution of $x(t)$ along with Markov chain $r_t$, where $x(t)$ with initial value $x(0)=1$ and  step $\Delta=1e-3$.}
\end{center}

\begin{center}
\scalebox{0.8}{\includegraphics{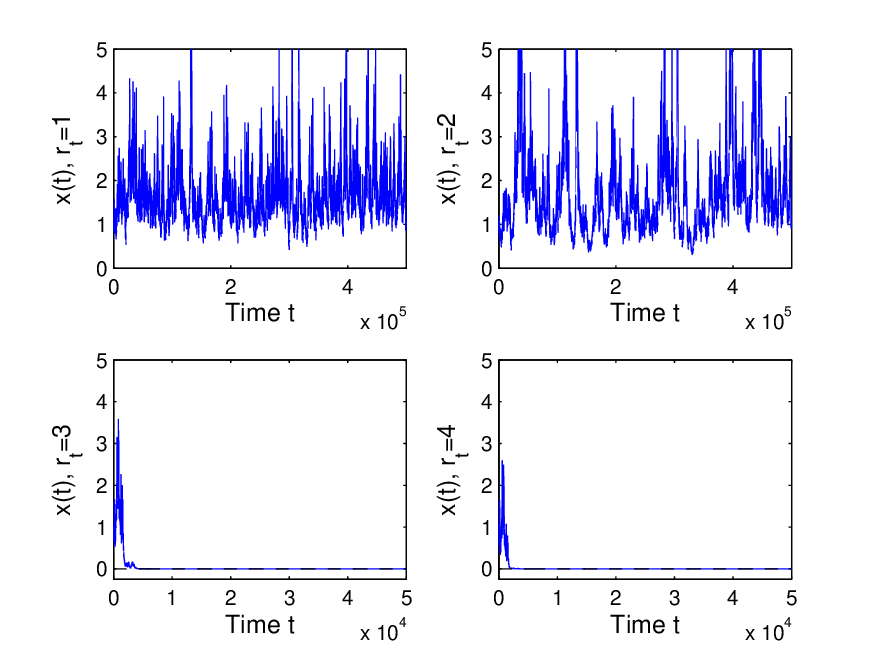}}\\
\footnotesize {\textbf{Figure 6.}\;\;
The evolution  of $x(t)$ for $r_t\equiv i, i\in\S,$ respectively, with initial value $x(0)=1$ and step $\Delta=1e-3$.
These figures show that $x(t)$ with $r_t\equiv3$, $r_t\equiv4$ are stable and vanish finally(down figures), but the other two states are unstable.}
\end{center}

\par Moreover, let $p=0.5$, we have $h(p)=(-0.1888,\,   -0.1450,\,   -0.0550,\,    0.1300)$ and
\[
\A(p)=\left(
        \begin{array}{cccc}
          9.8112  & -3.0000 &  -2.0000 &  -5.0000\\
         -6.0000  &  8.8550 &  -2.0000 &  -1.0000\\
         -3.0000  & -3.0000 &   7.9450 &  -2.0000\\
         -1.0000  & -5.0000 &  -3.0000 &   9.1300\\
        \end{array}
      \right)_.
\]
Its eigenvalues are \[\lambda=( -0.0751,\,12.7683 + 3.0485i,\,12.7683 - 3.0485i,\,10.2797),\] which yield $\A(p)$ is not a nonsingular $M$-matrix. Thus Theorem {\ref{thr}} is invalid, but the computer simulation shows that the trivial solution of model (\ref{ga}) is $0.5$th moment exponentially stable, see Figure 7 for more details. This shows that the assumption, $\A(p)$ is a nonsingular $M$-matrix, is not a necessary condition for the $p$th moment exponential stability of $x$ to model (\ref{ga}).

\begin{center}
\scalebox{0.8}
{\includegraphics{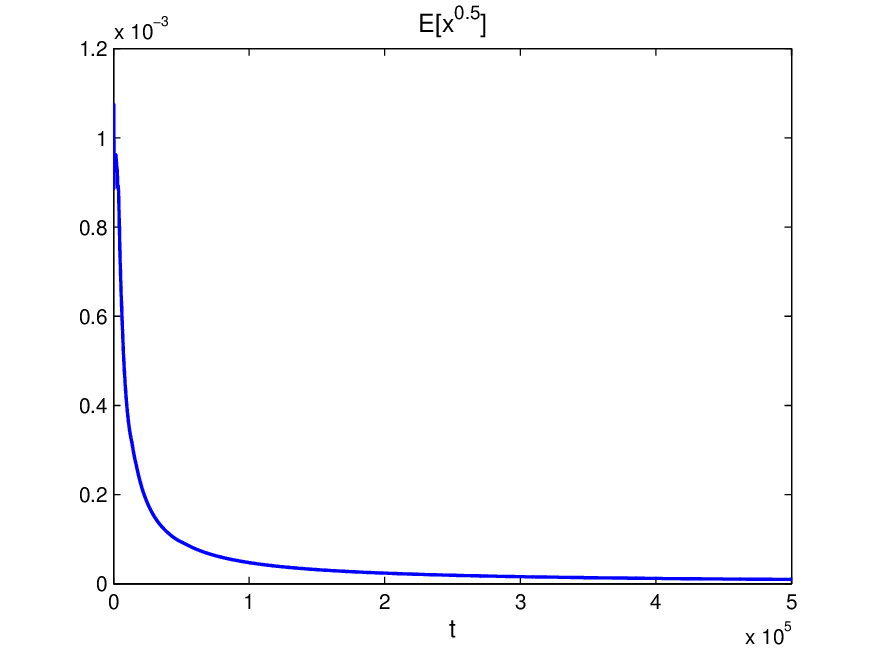}}\\
\footnotesize {\textbf{Figure 7.}\;\;
The 0.5th moment of $x(t)$ along with Markov chain $r_t$ is exponential stable, where $x(t)$ with initial value $x(0)=1$ and computed by step $\Delta=1e-3$.}
\end{center}

\begin{rem}
In this section, by using numerical method we verify all the theoretical results obtained in this paper. Meanwhile, from these two examples one can see the special character of regime switching system, that is, the stability of some subsystems does not implies the stability of the whole system, which is shown by Example 1, and Example 2 shows that the unstability of some subsystems does not yields the  unstability of the whole system.
\end{rem}

\section {Conclusion}
In this paper, we study  a stochastic GA model perturbed by regime switching and white noise.  Especially, we allow the GA parameters $\theta$  and $\gamma$ to   
vary according to a continuous-time Markov chain, reflecting the fact that the GA parameters may change in different environments. Compared with the models in the literatures(e.g., \cite{ga-73,li-13,liu-11,liu-12-a,liu-12-b,liu-15,Sa-15}), our formulation  provides a more realistic modeling of the population  dynamics, which also includes some mean
reverting  models in financial fields, such as the 3/2 model, Logistic diffusion model and Double-Well potential model. However, our model does introduce extra difficulty in the analysis because of the regime switching mechanism. We overcome these difficulties  by constructing  suitable Lyapunov functions and using some analysis technics.
 We show the existence of global positive solutions,  asymptotic stability in probability and extinction of model \eqref{ga} under the condition  $\sum_{i\in \S}\pi_i\mu(i)<0$, and obtain the weak persistence of the species described by the model under the condition that $\sum_{i\in \S}\pi_i\mu(i)>0$, also we get that the amplitude of the solution $x(t)$ to the model is at least $[x_*, x^*]$, which generalize the previous results. Meanwhile, we prove that its trivial solution is $p$th moment exponential stable by using the properties of nonsingular $M$-matrix $\A(p)$, but from the computer simulation, we know that this condition is not necessary. Under the assumptions  $\theta(i)+1\gt2\gamma(i)>0$ for each $i\in\S$ and $\sum_{i\in\S}\pi_i\mu(i)>0,$ we prove that, for any $(x_0,r_0)\in\R_+\times\S,$ the  solution $x(t)$ of model \eqref{ga} is positive recurrent and admits a unique ergodic  asymptotically invariant distribution $\nu$. Moreover, the species $x(t)$ described by model \eqref{ga} is stochastically permanent. Even for the case of no regime switching in the GA parameters, our condition imposed on  GA parameters  $\theta$ and $\gamma$ is $\theta+1\gt2\gamma>0$, which improves those in the previous works.

\section {Acknowledgments}
The authors  are very grateful to the referees and editors  for providing us with detailed comments and
suggestions for improving the quality of this paper.
The work was supported  by the NSFC (61703001),  the NSF
of Anhui Province (1708085MA17, 1508085QA13), the Key NSF of Education Bureau of Anhui Province
(KJ2018A0437) and the Support Plan of Excellent Youth Talents in Colleges and Universities in Anhui Province (gxyq2017011).

 %
\vspace{3mm}
{\footnotesize
}

\end{document}